\documentclass{amsart}
\usepackage{amsfonts,amsmath,amsthm,amssymb,mathrsfs,bbm,mathtools,enumitem}
\usepackage{lmodern}
\usepackage[T1]{fontenc}
\usepackage[pdfpagelabels,hyperfootnotes=false,plainpages=false]{hyperref}
\usepackage{color,soul,verbatim}

\theoremstyle{plain}
\newtheorem{theorem}{Theorem}
\newtheorem{claim}[theorem]{Claim}

\newtheorem{corollary}[theorem]{Corollary}
\newtheorem{lemma}[theorem]{Lemma}

\newtheorem{question}[theorem]{Question}

\theoremstyle{definition}

\newtheorem{definition}[theorem]{Definition}

\theoremstyle{plain}
\newcommand{\thistheoremname}{}
\newtheorem{genericthm}[theorem]{\thistheoremname}

\newtheorem*{genericthm*}{\thistheoremname}
\newenvironment{namedthm*}[1]
  {\renewcommand{\thistheoremname}{#1}%
   \begin{genericthm*}}
  {\end{genericthm*}}

\newcommand{\de}{\delta}

\newcommand{\al}{\alpha}
\newcommand{\ga}{\gamma}

\newcommand{\ka}{\kappa}
\newcommand{\eps}{\epsilon}
\newcommand{\R}{\mathbb{R}}
\newcommand{\Q}{\mathbb{Q}}
\newcommand{\Qplus}{\mathbb{Q}_+}
\newcommand{\Z}{\mathbb{Z}}
\newcommand{\N}{\mathbb{N}}

\newcommand\restr[2]{{ \left.\kern-\nulldelimiterspace #1 \right|_{#2}}}

\newcommand{\intpart}[1]{\left\lfloor #1 \right\rfloor}
\newcommand{\fracpart}[1]{\left\{ #1 \right\}}
\newcommand{\fracdist}[1]{\left\| #1 \right\|}
\newcommand{\bothceil}[1]{\left\lceil #1 \right\rceil}
\newcommand{\bothfloor}[1]{\left\lfloor #1 \right\rfloor}

\newcommand{\psa}{\text{PS}(\al)}
\newcommand{\ps}[1]{\text{PS}(#1)}

\newcommand{\varchange}[2]{t_{#1} \left({#2} \right)}
\newcommand{\varchangeempty}[1]{t_{#1}}

\newcommand{\intone}{\theta_1}
\newcommand{\inttwo}{\theta_2}
\newcommand{\intthree}{\theta_3}

\newcommand{\bigQ}{\mathcal{Q}}
\newcommand{\bigS}{\mathcal{S}}
\newcommand{\bigT}{\mathcal{T}}

\newcommand{\cent}{\varphi}
\newcommand{\cize}{\psi}
\newcommand{\openint}{J}

\newcommand{\supnormopenint}[1]{\|{#1}\|_{\overline{\openint},\infty}}

\newcommand{\finitesums}{\text{FS}}

\newcommand{\ip}{\text{IP}}
\newcommand{\vip}{\text{VIP}}

\newcommand{\ipzero}{\text{IP}_0}

\title[A Khintchine-type theorem and linear equations in P-S sequences]{A perturbed Khintchine-type theorem and solutions to linear equations in Piatetski-Shapiro sequences}
\author[D. Glasscock]{Daniel Glasscock}
\address{Department of Mathematics, Northeastern University, 360 Huntington Ave., Boston, MA 02115}
\email{dgglasscock@neu.edu}

\begin{document}

\begin{abstract} Our main result concerns a perturbation of a classic theorem of Khintchine in Diophantine approximation. We give sufficient conditions on a sequence of positive real numbers $(\cize_n)_{n \in \N}$ and differentiable functions $(\varphi_n: J \to \R)_{n \in \N}$ so that for Lebesgue-a.e. $\theta \in J$, the inequality $\| n\theta + \varphi_n(\theta) \| \leq \cize_n$ has infinitely many solutions. The main novelty is that the magnitude of the perturbation $|\varphi_n(\theta)|$ is allowed to exceed $\cize_n$, changing the usual ``shrinking targets'' problem into a ``shifting targets'' problem. As an application of the main result, we prove that if the linear equation $y=ax+b$, $a, b \in \R$, has infinitely many solutions in $\N$, then for Lebesgue-a.e. $\al > 1$, it has infinitely many or finitely many solutions of the form $\lfloor n^\al \rfloor$ according as $\al < 2$ or $\al > 2$. \end{abstract}

\subjclass[2010]{11J83; 11B83}

\keywords{metric Diophantine approximation, perturbed Khintchine-type theorem, Piatetski-Shapiro sequences, solutions to linear equations}

\maketitle

\section{Introduction}

Denote by $\{x\}$ the fractional part of $x \in \R$ and by $\|x\|$ the distance from $x$ to the integers. The main result in this paper concerns solutions in the positive integers $n \in \N$ to the system
\begin{align}\label{eqn:systemnewintro}\begin{dcases} \big\|\theta n + \cent_n(\theta) \big\| \leq \cize_n \\ \fracpart {\rho_n(\theta)} \in I \end{dcases},\end{align}
where $(\cize_n)_{n \in \N}$ is a sequence of positive real numbers, $I$ is an interval in $[0,1)$, $(\cent_n)_{n \in \N}$ and $(\rho_n)_{n \in \N}$ are sequences of differentiable functions on an interval $J \subseteq \R$, and $\theta \in J$.  More precisely, we show that under the right assumptions on these quantities, the system in (\ref{eqn:systemnewintro}) has infinitely many solutions for Lebesgue-almost every $\theta \in J$.

Because some of the assumptions are quite technical, we will state only the following special case of the main theorem here in the introduction. The theorem is stated in its full generality in Section \ref{sec:proofofmainthm}.

\begin{theorem}[Special case]\label{thm:mainthm}
Let $0 < \sigma < 1$, and put $\cize_n = 1/n^\sigma$. Let $J \subseteq \R$ be a non-empty, open interval. Let $(\cent_n)_{n\in\N}$ and $(\rho_n)_{n \in \N}$ be sequences of $C^1$ functions on $\overline{J}$ satisfying:
\begin{enumerate}
\item $\displaystyle \sup_{n \in \N} \supnormopenint {\cent_n} < \infty$;
\item $\displaystyle \big(\supnormopenint {\cent_n'}\big)_{n\in\N}$  is eventually non-increasing;
\item there exists $\eps > 0$ such that $\lim_{n \to \infty} \supnormopenint {\cent_n'} n^{2\sigma -1 + \eps } = 0$; and
\item $\lim_{n \to \infty} \supnormopenint {\rho_n'} n^{-(1+\sigma)} = 0$.
\end{enumerate}
In addition, suppose that $\rho_n$ satisfies an equidistribution condition.\footnote{Condition (C1) from the statement of Theorem \ref{thm:mainthm} in its full generality in Section \ref{sec:proofofmainthm}.} Let $I \subseteq [0,1)$ be a non-empty interval. For Lebesgue-a.e. $\theta \in \openint$, the system in (\ref{eqn:systemnewintro}) has infinitely many solutions.
\end{theorem}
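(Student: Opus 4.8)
The plan is to prove this as a second-moment (Borel–Cantelli) statement. For each $n$, let $A_n \subseteq J$ be the set of $\theta$ for which $n$ solves the system in (\ref{eqn:systemnewintro}), so the conclusion is that $\limsup_n A_n$ has full measure in $J$. By the divergence Borel–Cantelli lemma (in the quantitative form due to Erdős–Rényi / Sprindžuk), it suffices to show two things: first, that $\sum_n |A_n| = \infty$; and second, a quasi-independence estimate of the shape $\sum_{m,n \le N} |A_m \cap A_n| \ll \big(\sum_{n \le N} |A_n|\big)^2$, after which a standard argument (Cauchy–Schwarz on the indicator sums, plus the fact that $\limsup A_n$ differs from its restriction to any subinterval by a null set, or the Lebesgue density theorem) upgrades "positive measure" to "full measure."

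The first step is to compute $|A_n|$. Fixing $n$, the condition $\|\theta n + \cent_n(\theta)\| \le \cize_n$ together with $\{\rho_n(\theta)\} \in I$ carves $J$ into pieces; the key point is that $\theta \mapsto \theta n + \cent_n(\theta)$ has derivative $n + \cent_n'(\theta)$, which by assumptions (1)–(3) is $n(1 + o(1))$, hence a near-isometry after rescaling by $n$. So the set where $\|\theta n + \cent_n(\theta)\| \le \cize_n$ is a union of roughly $|J|\cdot n$ intervals each of length $\approx 2\cize_n / n = 2/n^{1+\sigma}$, giving total measure $\asymp \cize_n \asymp 1/n^\sigma$ before intersecting with the $\rho_n$-condition. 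The equidistribution condition (C1) on $\rho_n$ — whose derivative is controlled by (4) to be small relative to the scale $n^{1+\sigma}$ of the $\cent_n$-intervals, so that $\rho_n$ is essentially constant on each such interval — ensures that intersecting with $\{\rho_n(\theta)\in I\}$ only multiplies the measure by $\approx |I|$. Thus $|A_n| \asymp |I|\,|J|/n^\sigma$, and since $\sigma < 1$, the series $\sum_n |A_n|$ diverges. This is where the hypotheses are used most delicately, and getting honest error terms here — rather than heuristic $o(1)$'s — is part of the work.

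The genuinely hard part is the quasi-independence estimate for $|A_m \cap A_n|$ with $m < n$. Here I would follow the classical strategy for $\|\theta m\|$-type sets: $A_m$ is (essentially) a union of $\asymp m$ arcs of length $\asymp 1/m^{1+\sigma}$ equispaced at scale $1/m$, and likewise $A_n$ at scale $1/n$, and one wants to show these two families meet in measure $\approx |A_m||A_n|$, i.e. that the fine-scale structure of $A_n$ is "spread out" relative to the arcs of $A_m$. Quantitatively this reduces to counting integer solutions of $|qm - pn| \le$ (small), i.e. to a gcd/divisor estimate; the perturbations $\cent_m, \cent_n$ shift the arcs but, being $O(1)$ with small derivative, do not destroy this count, while the $\rho$-conditions can be absorbed because on the relevant scale $\rho_m, \rho_n$ are nearly constant. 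The upshot should be a bound $|A_m \cap A_n| \ll |A_m||A_n| + (\text{diagonal term})$ with the diagonal contributing $\sum_{n\le N}|A_n|$, which is $o\big((\sum |A_n|)^2\big)$ by divergence. I expect the main obstacle to be bookkeeping the perturbation terms uniformly across all pairs $m,n$ — in particular ensuring the hypothesis (3), with its $n^{2\sigma-1+\eps}$ weight, is exactly strong enough to control the cross terms where the two scales $1/m$ and $1/n$ are comparable (i.e. $m \asymp n$) — and then running the Erdős–Rényi lemma and the density/restriction argument to pass from full measure on $J$ cleanly.
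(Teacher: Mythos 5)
Your high-level strategy---second-moment Borel--Cantelli plus a pair-correlation estimate, finishing with a density argument to upgrade positive measure to full measure---is indeed the one the paper follows, and your identification of the two sub-goals (divergence of $\sum \lambda(A_n)$, quasi-independence) is correct. But there are two load-bearing ideas in the paper that your sketch omits or hand-waves, and without them the proof does not close.

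First, the paper restricts attention to prime indices: it applies Lemma~\ref{lem:advancedBC} to $\limsup_{p\ \mathrm{prime}} G_p$, not to all $n$. This is not cosmetic. After a dyadic decomposition in $q$, the overlap sum is reduced (via (\ref{eqn:Enisaunionofintervals}) and (\ref{eqn:overlapinequality})) to counting triples $(q,r,s)$ with $|q(r-\cent_p(r/p)) - p(s-\cent_q(s/q))| \le L$, and the unperturbed count in Lemma~\ref{lem:basicsolutionstoinequality} relies on $p$ being prime so that each $q\in\bigQ$ has an inverse $\overline q\pmod p$ and the congruence $r\equiv \overline q\ell\pmod p$ controls $r$. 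Summing over all $m,n$ would force you to handle $\gcd(m,n)>1$, which is not done here and would be a genuine extra burden. Relatedly, divergence is needed for $\sum_p \psi_p$ over primes (condition (A4)), which holds for $\psi_n=n^{-\sigma}$ with $\sigma<1$ by PNT, but you should say so.

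Second, and this is the real gap: your claim that the perturbations, ``being $O(1)$ with small derivative, do not destroy this count'' is exactly the thing that must be proved, and it is the paper's main technical contribution (Lemma~\ref{lem:hardersolutionstoinequality}). Once $r$ is replaced by $r-\cent_p(r/p)$, the quantity $q(r-\cent_p(r/p))-p(s-\cent_q(s/q))$ is no longer an integer, so the modular-arithmetic argument of Lemma~\ref{lem:basicsolutionstoinequality} does not apply directly. The paper's workaround is to cover $J$ by $\asymp \lambda(J)\omega^{-1}$ intervals of tiny length $\omega/5$ (with $\omega$ calibrated by the inequality (\ref{eqn:etainequality}), which is where your hypotheses (1), (2), (4) enter), and within each short interval to \emph{difference} two solutions $(q,r_1,s_1)$, $(q,r_2,s_2)$: the MVT makes the perturbation differences small, so $(q,r_1-r_2,s_1-s_2)$ lies in a genuine integer solution set $\bigT_0$ with $|qr_0-ps_0|\le 3L$, to which Lemma~\ref{lem:basicsolutionstoinequality} applies. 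The price is a factor $\omega^{-1}$, and condition (3) (via (B3) and (B4) in the full theorem) is engineered precisely so that $\omega^{-1}\psi_p(\log p)^2$ summed over primes is $o\bigl((\sum_p\psi_p)^2\bigr)$. Your heuristic ``the condition is exactly strong enough to control the cross terms where $m\asymp n$'' gestures at this but does not substitute for the subdivision-and-differencing argument. A minor further remark: in the quasi-independence estimate the paper just drops the $\rho$-constraint via $\lambda(G_p\cap G_q)\le \lambda(E_p\cap E_q)$; equidistribution (C1) and the derivative bound (C2) are used only for the \emph{lower} bound $\lambda(G_p)\gg\psi_p$, via the middle-third trick in Claim~\ref{lem:inInaughtimpliesinI}.
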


Theorem \ref{thm:mainthm} is best contextualized as a ``perturbed'' and ``twisted'' variant of Khintchine's classic result in Diophantine approximation \cite{khintchinediophantineapprox}; see also \cite[Theorem 2.2]{harmanbook}.  Under the assumptions that $(n \psi_n)_{n \in \N}$ is non-increasing and that $\sum_{n=1}^\infty \psi_n = \infty$, Khintchine proved that the inequality $\|\theta n\| \leq \cize_n$ has infinitely many solutions for a.e. $\theta \in \R$. Interpreted in the language of dynamics, there are infinitely many times $n \in \N$ for which the point $0$ under the rotation $x \mapsto x+ \theta$ on the torus $\R/\Z$ lands into the ``shrinking target'' given by $(-\cize_n,\cize_n)$.

The quantity $\varphi_n(\theta)$ signifies a perturbation of this rotation.  In the case that the magnitude of the perturbation $|\varphi_n(\theta)|$ is less than the accuracy of the approximation $\cize_n$, a simple application of the triangle inequality eliminates the perturbation and reduces the inequality in (\ref{eqn:systemnewintro}) to one covered by Khintchine's theorem. This is one of the primary strengths of our main result: the magnitude of the perturbation is allowed to exceed $\cize_n$. Dynamically, the targets we consider take the form $(-\varphi_n(\theta)-\cize_n,-\varphi_n(\theta)+\cize_n)$; they are shrinking in length but no longer nested, leading to a ``shifting targets'' problem.

Of course, solving such a shifting targets problem would be impossible without strong restrictions on the nature of the perturbation. Indeed, with no restrictions, one could define $\varphi_n(\theta) = -\theta n + \xi_n(\theta)$ for an arbitrary function $\xi_n: \overline{J} \to \R$ and prove that there are infinitely many solutions to the inequality $\|\xi_n(\theta)\| \leq \cize_n$.   There are few theorems to the author's knowledge that supply sufficient conditions on a sequence of functions $(\xi_n)_{n \in \N}$ to recover such a general result; for one such result, see \cite{cassels}.  In the case that $\cize_n = 1/n^\sigma$, conditions (1), (2), and (3) are sufficient restrictions on the perturbation to solve the shifting targets problem.

The twist to this shifting targets problem is provided by the second condition in (\ref{eqn:systemnewintro}). The inclusion of this condition is primarily motivated by our main application and does not cause a great deal of added difficulty in the proof of Theorem \ref{thm:mainthm}. Setting $I = [0,1)$ in the statement of the theorem allows one to entirely disregard this twist, and it is the author's belief that the resulting theorem remains novel and potentially useful outside the scope of this work.


A illustrative example to which Theorem \ref{thm:mainthm} applies is gotten by putting $\cize_n = 1/n^\sigma$ and $\varphi_n(\theta) = \sin(n^\kappa \theta) / n^\delta$ where $0 < \delta < \min(\sigma, \kappa)$ and $\kappa + 2\sigma -\delta < 1$.  (Take, for example, $\sigma = \kappa = 1/3$ and $\delta = 1/6$.) In this case, the perturbation $\varphi_n$ oscillates, and its magnitude is greater than $\cize_n$. The proof of Theorem \ref{thm:mainthm} makes use of the monotonicity of $\supnormopenint {\cent_n'}$ and its decay to find solutions to (\ref{eqn:systemnewintro}).

At the heart of the proof is Lemma \ref{lem:hardersolutionstoinequality}, a result on the number of solutions to a perturbed version of the inequality $|qr-ps| \leq L$, one that arises frequently in the theory.  Key in counting solutions to this Diophantine inequality is the fact that the quantities involved are integers, making it amenable to techniques in basic number theory.  The perturbed inequality no longer has this feature; counting its solutions is the main technical difficulty overcome in this work.

The primary motivation for Theorem \ref{thm:mainthm} came from the desire to improve on the main result in \cite{glasscocksolutionstops} concerning solutions to linear equations in Piatetski-Shapiro sequences. A \emph{Piatetski-Shapiro sequence} is a sequence of the form $(\intpart {n^\al})_{n \in \N}$ for non-integral $\al > 1$ where $\bothfloor x$ denotes the integer part (or floor) of $x \in \R$. The image of $n \mapsto \intpart {n^\al}$ is denoted by $\psa$. We will say that the linear equation
\begin{align}\label{eqn:main}y=ax+b, \quad a, b \in \R\end{align}
is \emph{solvable} in $\psa$ if there are infinitely many distinct pairs $(x,y) \in \psa \times \psa$ satisfying (\ref{eqn:main}), and \emph{unsolvable} otherwise. This terminology extends as expected to solving equations and systems of equations in other subsets of $\N$.

\begin{theorem}\label{thm:mainapplication}
Suppose $a, b \in \R$, $a \not\in \{0,1\}$, are such that (\ref{eqn:main}) is solvable in $\N$. For Lebesgue-a.e. $\al > 1$, the equation (\ref{eqn:main}) is solvable or unsolvable in $\psa$ according as $\al < 2$ or $\al > 2$.
\end{theorem}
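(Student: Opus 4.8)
The plan is to derive the $\al<2$ half of the statement from Theorem~\ref{thm:mainthm} (after a change of variable and a careful choice of the twist) and the $\al>2$ half from a convergence Borel--Cantelli estimate. First some reductions. If $a\le0$ then $ax+b\to-\infty$, so \eqref{eqn:main} has only finitely many solutions in $\N$; thus $a>0$, and since $a\notin\{0,1\}$ and solvability of \eqref{eqn:main} in $\psa$ is invariant under $(x,y)\mapsto(y,x)$ (which sends $a\mapsto1/a$), I may assume $a>1$ and write $a=p/q$ in lowest terms with $p>q\ge1$. Solvability in $\N$ forces $qb\in\Z$, say $b=s/q$ with $s\in\Z$, and --- apart from finitely many --- the solutions of \eqref{eqn:main} in $\N$ are the pairs $\bigl(x,(px+s)/q\bigr)$ with $x\equiv j_0\pmod q$, where $j_0\in\{0,\dots,q-1\}$ satisfies $pj_0+s\equiv0\pmod q$. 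Since $m\mapsto\intpart{m^\al}$ is injective and each element of $\psa$ equals $\intpart{m^\al}$ for a unique $m$, equation \eqref{eqn:main} is solvable in $\psa$ if and only if there are infinitely many $m\in\N$ with $\intpart{m^\al}\equiv j_0\pmod q$ and $K'_m:=(p\intpart{m^\al}+s)/q\in\psa$.

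\emph{Unpacking ``$K'_m\in\psa$''.} Now $K'_m\in\psa$ iff some integer lies in $\bigl[(K'_m)^{1/\al},(K'_m+1)^{1/\al}\bigr)$, an interval of length $\ell_m\asymp m^{1-\al}$; writing $\intpart{m^\al}=m^\al-\fracpart{m^\al}$ and Taylor expanding gives $(K'_m)^{1/\al}=\be m-c_m+O(m^{1-2\al})$ with $\be=(p/q)^{1/\al}$ and $c_m=\be\bigl(p\fracpart{m^\al}-s\bigr)/(\al p\,m^{\al-1})$ of size comparable to $\ell_m$. So, up to errors negligible against $\ell_m$, ``$K'_m\in\psa$'' is a shifting-target inequality $\fracdist{\be m-c_m+\tfrac12\ell_m}\le\tfrac12\ell_m$ of the form treated by Theorem~\ref{thm:mainthm}, with index $m$ and multiplier $\be$; one substitutes $\theta=(p/q)^{1/\al}$, a $C^1$ diffeomorphism carrying $(1,2)$ onto $\bigl((p/q)^{1/2},p/q\bigr)$ and preserving null sets, so that the multiplier becomes $\theta$. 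The obstruction is that the perturbation $-c_m$ contains $\fracpart{m^\al}$, which is \emph{not} $C^1$ in $\al$: between its (polynomially many) jumps it has derivative $\asymp m^\al\log m$, in conflict with hypotheses (1)--(3).

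\emph{The case $\al<2$.} The fix is to make the twist control $\fracpart{m^\al}$. Take $\rho_m(\al)=m^\al/q$ and $I=[j_0/q,\, j_0/q+\de)$ with $\de=1/(10p)$ (so $\de<1/q$); then the twist $\fracpart{\rho_m(\al)}\in I$ is exactly ``$\intpart{m^\al}\equiv j_0\pmod q$ and $\fracpart{m^\al}\in[0,q\de)$''. For the $m$ meeting this twist, $c_m$ differs from the $C^1$ function $\widetilde c_m(\al)=-\be s/(\al p\,m^{\al-1})$ by less than $\tfrac1{10}\ell_m^{*}$, where $\ell_m^{*}(\al)=\tfrac1\al(\be m)^{1-\al}$ is the smooth leading term of $\ell_m$, so ``$K'_m\in\psa$'' is \emph{implied} by $\fracdist{\theta m+\cent_m(\theta)}\le\cize_m$ with $\cent_m=-\widetilde c_m+\tfrac12\ell_m^{*}$ (a $C^1$ function of $\theta$) and any $\cize_m\le\tfrac14\ell_m^{*}$. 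I would cover $(1,2)$ by countably many compact intervals $[\al_0,\al_1]\subset(1,2)$ with $2\al_1-\al_0-2<0$; on each, with $\theta$ ranging over the open image interval $\openint$, apply Theorem~\ref{thm:mainthm} with $\sigma=\al_1-1+\eps$ for small $\eps>0$ (so $0<\sigma<1$), $\cize_m=m^{-\sigma}$ (which for large $m$ is $\le\tfrac14\ell_m^{*}(\theta)$ on all of $\openint$), $\cent_m$ as above, and $\rho_m(\al)=m^\al/q$. Conditions (1)--(4) then follow from elementary estimates: $\sup_m\|\cent_m\|<\infty$; $\|\cent_m'\|$ is eventually non-increasing and $\|\cent_m'\|\,m^{2\sigma-1+\eps}\asymp m^{2\al_1-\al_0-2+O(\eps)}\log m\to0$ (precisely where $2\al_1-\al_0-2<0$ is used); $\|\rho_m'\|\,m^{-(1+\sigma)}\asymp m^{-\eps}\log m\to0$; and the equidistribution condition for $\rho_m=m^\al/q$ holds by a van der Corput / second-derivative argument, since $\rho_m'\asymp m^\al\log m\to\infty$. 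Theorem~\ref{thm:mainthm} then supplies, for a.e.\ $\theta\in\openint$, infinitely many $m$ satisfying both the twist and $\fracdist{\theta m+\cent_m(\theta)}\le\cize_m$, hence infinitely many $m$ with $\intpart{m^\al}\equiv j_0\pmod q$ and $K'_m\in\psa$, so \eqref{eqn:main} is solvable in $\psa$. Transporting to $\al$ and taking the union over the intervals yields solvability for a.e.\ $\al\in(1,2)$.

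\emph{The case $\al>2$, and the main obstacle.} For $\al>2$ a convergence Borel--Cantelli argument suffices and the non-smoothness of $\fracpart{m^\al}$ is harmless. Fixing $2<\al_0<\al_1$, it is enough to show $\sum_m\bigl|\{\al\in(\al_0,\al_1):K'_m\in\psa\}\bigr|<\infty$. On each maximal $\al$-interval on which $\intpart{m^\al}$ is constant --- there are $\asymp m^{\al_1}$ of them, each of length $\lesssim m^{-\al_0}$, and $K'_m\in\N$ on a $1/q$-proportion --- the quantity $(K'_m)^{1/\al}$ is monotone, and across these intervals it drifts like $m(p/q)^{1/\al}$, crossing $\asymp m$ integers as $\al$ runs over $(\al_0,\al_1)$; near each crossing ``$K'_m\in\psa$'' can hold only on $O(1)$ of these intervals, contributing $\lesssim m^{-\al_0}$ to the measure, whence $\bigl|\{\al\in(\al_0,\al_1):K'_m\in\psa\}\bigr|\lesssim m^{1-\al_0}$, which is summable for $\al_0>2$. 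Letting $\al_0\downarrow2$ and $\al_1\uparrow\infty$ gives unsolvability for a.e.\ $\al>2$. The hard part is the $\al<2$ half, specifically the two points flagged above: extracting the non-$C^1$ term $\fracpart{m^\al}$ from the perturbation (handled by the short twist interval $I$) and reconciling the decay of $\cent_m'$ with the target size $\cize_m=m^{-\sigma}$ in hypothesis~(3) (handled by working on short $\al$-intervals). After these, verifying the full hypotheses of Theorem~\ref{thm:mainthm} and the equidistribution of $(m^\al/q)$ is routine bookkeeping.
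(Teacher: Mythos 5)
Your proposal is correct and follows the same overall route as the paper: reduce to a perturbed Khintchine-type problem, invoke Theorem~\ref{thm:mainthm} on short $\al$-intervals for $\al<2$ (with exactly the constraint $2\al_1-\al_0-2<0$ the paper imposes in (\ref{eqn:intproperty})), and a first Borel--Cantelli estimate for $\al>2$. There are two genuine but minor variations worth noting. First, the twist: the paper chooses $I'=[1/3,2/3]$ (so $\fracpart{n^\al}$ is pinned to the middle third) in order to push both endpoints $L_n$ and $R_n$ of the target interval away from zero, and then compares the ``corner'' shift $U_n$ to a smooth function; you instead pin $\fracpart{m^\al}$ into a short interval $[0,q\de)$ near $0$ so that the non-smooth term in the \emph{midpoint} shift $c_m$ is directly $<\tfrac1{10}\ell_m^*$. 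Both uses of the twist neutralize the $\fracpart{m^\al}$-dependence; your version is arguably cleaner since it avoids the two-step ``wide interval plus center comparison'' bookkeeping. Second, for $\al>2$ the paper performs the change of variables $\theta=a^{1/\al}$ first and Borel--Cantellis the sets $H_n$ in $\theta$-space; you estimate $\lvert\{\al\in(\al_0,\al_1):K'_m\in\ps\al\}\rvert\lesssim m^{1-\al_0}$ directly in $\al$, which is fine and summable for $\al_0>2$. The one place you are a bit too quick is the justification of condition (C1): the sentence ``since $\rho_m'\asymp m^\al\log m\to\infty$'' refers to the $\al$-derivative, whereas (C1) concerns equidistribution over the \emph{inner} grid index $m$ for each fixed outer index; the correct argument (van der Corput differencing with second/third derivative bounds in the grid variable, as in the paper's Lemma~\ref{lem:equid}) is indeed routine, but your cited reason is not the relevant derivative. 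This is a matter of precision rather than a gap in the approach.
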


This theorem strengthens the main result in \cite{glasscocksolutionstops} by removing a restriction on $a$ and $b$; it is Theorem \ref{thm:mainthm} that allows us to overcome that restriction. Theorem \ref{thm:mainapplication} brings the the main result in \cite{glasscocksolutionstops} to its proper conclusion; the reader is encouraged to consult that paper for the motivation on finding solutions to linear equations and more general combinatorial structure in Piatetski-Shapiro sequences. The following corollary demonstrates how Theorem \ref{thm:mainapplication} allows us to further that goal. Denote by $\Qplus$ the set of positive rational numbers.

\begin{corollary}\label{cor:maincortwo}
For Lebesgue-a.e. $\al > 1$, the limiting quotient set of $\psa$,
\begin{align}\label{eqn:limitquotentset}\bigcap_{N=1}^\infty \left\{ \ \frac mn \ \middle| \ m,n \in \psa \cap [N,\infty) \right\},\end{align}
is equal to $\Qplus$ or $\{1\}$ according as $\al < 2$ or $\al > 2$.
\end{corollary}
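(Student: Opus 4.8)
The plan is to deduce Corollary~\ref{cor:maincortwo} from Theorem~\ref{thm:mainapplication} applied to a countable family of linear equations. Fix a representative $\al > 1$ outside the relevant measure-zero sets; I will argue separately for the two ranges $1 < \al < 2$ and $\al > 2$.

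For the case $\al > 2$, the goal is to show the limiting quotient set is $\{1\}$. The inclusion $\{1\} \subseteq \bigcap_N \{m/n : m,n \in \psa \cap [N,\infty)\}$ is trivial since $\psa$ is infinite (take $m = n$). For the reverse inclusion, suppose some $p/q \in \Qplus$ with $p \neq q$ lies in the limiting quotient set. Then for every $N$ there are $m, n \in \psa \cap [N,\infty)$ with $qm = pn$; in particular the equation $y = (p/q)x + 0$, equivalently $qy = px$, has infinitely many solutions in $\psa$. But the linear equation $y = (p/q)x$ has infinitely many solutions in $\N$ (take $x = qk$, $y = pk$), its coefficient $a = p/q \notin \{0,1\}$, so Theorem~\ref{thm:mainapplication} applies: for a.e.\ $\al > 1$ it is unsolvable in $\psa$ when $\al > 2$. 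Running this over the countably many pairs $(p,q)$ with $p \neq q$ and discarding the corresponding null sets, we find that for a.e.\ $\al > 2$ no such $p/q$ survives, so the limiting quotient set is exactly $\{1\}$.

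For the case $1 < \al < 2$, the goal is to show the limiting quotient set equals all of $\Qplus$. Given any $p/q \in \Qplus$, I want infinitely many pairs $m, n \in \psa$ arbitrarily far out with $m/n = p/q$; this is exactly asking the equation $y = (p/q)x$ (i.e.\ $qy = px$), which has the form \eqref{eqn:main} with $a = p/q \notin \{0,1\}$, $b = 0$, and which is solvable in $\N$, to be solvable in $\psa$ --- and Theorem~\ref{thm:mainapplication} gives exactly this for a.e.\ $\al \in (1,2)$. A minor point: Theorem~\ref{thm:mainapplication} produces infinitely many solution pairs $(x,y)$ to $qy = px$ with $x, y \in \psa$, and since there are infinitely many such pairs the values of $\min(x,y)$ are unbounded, so each ratio $p/q$ lies in $\{m/n : m,n \in \psa \cap [N,\infty)\}$ for every $N$. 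Intersecting over $N$ and unioning over the countably many $p/q$ (again discarding a countable union of null sets) shows $\Qplus$ is contained in the limiting quotient set for a.e.\ $\al \in (1,2)$; the reverse inclusion is automatic since every quotient of elements of $\psa \subseteq \N$ is a positive rational.

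The only genuine subtlety --- and it is minor --- is the bookkeeping that lets a single a.e.\ statement over $\al$ handle all of $\Qplus$ simultaneously: each equation $qy = px$ carries its own exceptional null set from Theorem~\ref{thm:mainapplication}, but since $\Qplus$ is countable the union of these null sets is still null, so a full-measure set of $\al$ works for every $p/q$ at once. I expect no real obstacle here; the corollary is essentially a repackaging of Theorem~\ref{thm:mainapplication} with $b = 0$, together with the observation that the limiting quotient set being $\Qplus$ (resp.\ $\{1\}$) is equivalent to every (resp.\ no) nontrivial ratio equation being solvable in $\psa$.
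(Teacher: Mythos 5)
Your proof is correct and takes essentially the same route as the paper: both reduce the corollary to applying Theorem~\ref{thm:mainapplication} with $b=0$ to the countable family of equations $y = ax$, $a \in \Qplus \setminus \{1\}$, observe that $a$ lies in the limiting quotient set precisely when this equation is solvable in $\psa$, and handle the measure-zero exceptional sets via countability. You spell out the bookkeeping a bit more explicitly than the paper does, but the argument is the same.
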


In \cite[Section 5]{glasscocksolutionstops}, a number of questions were posed about further combinatorial structure in Piatetski-Shapiro sequences. Our Theorem \ref{thm:mainapplication} resolves the first of those questions, but to the author's knowledge, the other questions remain open.  The interested reader is encouraged to consult that paper for further ideas and references.

This paper is organized as follows. We begin by establishing notation and preliminary lemmas in Section \ref{sec:auxlemmas}, then we prove Theorem \ref{thm:mainthm} in Section \ref{sec:proofofmainthm}. The main application to Piatetski-Shapiro sequences, Theorem \ref{thm:mainapplication}, and Corollary \ref{cor:maincortwo} are proven in Section \ref{sec:reducttodiophantineapprox}.

\section{Notation and auxillary lemmas}\label{sec:auxlemmas}

For $x \in \R$, denote the distance to the nearest integer by $\fracdist x$, the fractional part by $\fracpart x$, the integer part (or floor) by $\bothfloor x$, and the ceiling by $\bothceil x := -\bothfloor {-x}$. Denote the Lebesgue measure on $\R$ by $\lambda$, and denote the set of those points belonging to infinitely many of the sets in the sequence $(E_n)_{n \in \N}$ by $\limsup_{n \to\infty} E_n$. Given two positive-valued functions $f$ and $g$, we write $f \ll_{a_1,\ldots,a_k} g$ or $g \gg_{a_1,\ldots,a_k} f$ if there exists a constant $K > 0$ depending only on the quantities $a_1, \ldots, a_k$ for which $f(x) \leq K g(x)$ for all $x$ in the domain common to both $f$ and $g$. The supremum norm of a real-valued continuous function $\varphi$ on an interval $J$ is denoted by $\|\varphi\|_{J,\infty}$.

The following five lemmas play a key role in the proof of Theorem \ref{thm:mainthm}. The first three are standard.  The fourth and fifth concern solutions to the fundamental inequality $|qr-ps| \leq L$ and a perturbation of it. On a first reading, it would be possible to skip to the statement and proof of the main theorem in the next section and use the remainder of this section simply as a reference.

\begin{lemma}[{\cite[Lemma 1.6]{harmanbook}}]\label{lem:densitylemma}
Let $I \subseteq \R$ be an interval and $A \subseteq I$ be measurable.  If there exists a $\de > 0$ such that for every sub-interval $I' \subseteq I$, $\lambda (A \cap I') \geq \de \lambda (I')$, then $A$ is of full measure in $I$: $\lambda( I \setminus A ) = 0$.
\end{lemma}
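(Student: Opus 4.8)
The plan is to prove this (it is the Lebesgue density lemma) directly from the outer regularity of Lebesgue measure, keeping the argument self-contained. An alternative one-line route would be to invoke the Lebesgue density theorem: almost every point of $I \setminus A$ is a density point of $I \setminus A$, which contradicts the uniform lower bound $\lambda(A \cap I') \geq \de\lambda(I')$ on the small intervals $I'$ centered there; but I prefer to avoid citing that result.

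First I would reduce to the case where $I$ is bounded. Since $I \setminus A = \bigcup_{N \in \N}\big((I \cap [-N,N]) \setminus A\big)$ and each $I \cap [-N,N]$ is again a subinterval of $I$ satisfying the same hypothesis, it suffices to show $\lambda\big((I \cap [-N,N]) \setminus A\big) = 0$ for every $N$. So from now on assume $I$ is bounded; in particular $\lambda(I \setminus A) < \infty$.

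Now fix $\eps > 0$. By outer regularity choose an open set $U \supseteq I \setminus A$ with $\lambda(U) \leq \lambda(I \setminus A) + \eps$, and write $U$ as a countable disjoint union of open intervals. Intersecting each of these with $I$ and discarding the empty pieces yields countably many pairwise disjoint subintervals $I_k \subseteq I$ with $\bigcup_k I_k = U \cap I \supseteq I \setminus A$ and $\sum_k \lambda(I_k) = \lambda(U \cap I) \leq \lambda(I \setminus A) + \eps$. Applying the hypothesis to each $I_k$ gives $\lambda(A \cap I_k) \geq \de\lambda(I_k)$, hence $\lambda(I_k \setminus A) \leq (1 - \de)\lambda(I_k)$. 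Since $I_k \subseteq I$, the sets $I_k \setminus A$ are pairwise disjoint with union equal to $I \setminus A$, so
\[ \lambda(I \setminus A) = \sum_k \lambda(I_k \setminus A) \leq (1 - \de)\sum_k \lambda(I_k) \leq (1-\de)\big(\lambda(I \setminus A) + \eps\big). \]
Letting $\eps \to 0$ gives $\lambda(I \setminus A) \leq (1 - \de)\lambda(I \setminus A)$, i.e.\ $\de\lambda(I \setminus A) \leq 0$; as $\de > 0$ and $\lambda(I \setminus A)$ is finite and non-negative, this forces $\lambda(I \setminus A) = 0$.

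There is no genuine obstacle in this argument; the only points deserving care are the reduction to bounded $I$ (so that $\lambda(I \setminus A)$ is finite and can be cancelled at the last step) and the elementary fact that an open subset of an interval splits into countably many pairwise disjoint subintervals, each of which is eligible for the hypothesis.
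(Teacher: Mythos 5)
The paper gives no proof of this lemma; it is cited verbatim from Harman's book (Lemma 1.6 there), so there is no in-paper argument to compare against. Your self-contained proof via outer regularity is correct. The reduction to bounded $I$, the cover of $I\setminus A$ by a near-minimal open set $U$, the decomposition into disjoint component intervals intersected with $I$, and the final cancellation $\lambda(I\setminus A)\leq(1-\de)\lambda(I\setminus A)$ all go through. Two small points worth being explicit about, though neither is a gap: (i) the pieces $I_k$ obtained by intersecting the components of $U$ with $I$ need not be open intervals --- they may be half-open or closed at an endpoint of $I$ --- but the hypothesis quantifies over all subintervals $I'\subseteq I$, so this is harmless; (ii) the identity $\bigcup_k(I_k\setminus A)=I\setminus A$ uses both $\bigcup_k I_k=U\cap I\subseteq I$ (for $\subseteq$) and $I\setminus A\subseteq U\cap I$ (for $\supseteq$), and pairwise disjointness of $I_k\setminus A$ follows from pairwise disjointness of the $I_k$, not from $I_k\subseteq I$ as phrased. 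These are cosmetic; the argument is sound, and it is essentially the standard proof one would find in Harman.
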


\begin{lemma}[{\cite[Lemma 2.3]{harmanbook}}]\label{lem:advancedBC}
Let $(X,\mathcal{B},\mu)$ be a measure space with $\mu(X)<\infty$. If $(G_n)_{n \in \N} \subseteq \mathcal{B}$ is a sequence of subsets of $X$ for which $\sum_{n=1}^\infty \mu(G_n) = \infty$, then
\[\mu\left(\limsup_{n\to\infty} G_n \right) \geq \limsup_{N \to \infty} \left( \sum_{n=1}^N \mu(G_n) \right)^2 \left( \sum_{n,m=1}^N \mu(G_n \cap G_m) \right)^{-1}.\]
\end{lemma}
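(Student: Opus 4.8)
The plan is to deduce the bound from a single application of the Cauchy--Schwarz inequality to the partial counting functions of the sets $G_n$, with the finiteness hypothesis $\mu(X)<\infty$ entering only at the end via continuity of $\mu$ from above. Throughout, write $A_N=\sum_{n=1}^N\mu(G_n)$ and $B_N=\sum_{n,m=1}^N\mu(G_n\cap G_m)$, and note that $B_N\ge A_N\to\infty$ by hypothesis and that each $G_n$ has finite measure. One may assume $0<B_N<\infty$ for all large $N$, since a term with $B_N=\infty$ contributes $0$ to the right-hand side and hence can be dropped from the $\limsup$, while $B_N=0$ is impossible once $A_N>0$.

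First I would fix an index $M\in\N$ and, for $N\ge M$, work with the tail counting function $f=\sum_{n=M}^N\mathbbm{1}_{G_n}\in L^2(\mu)$, which is supported on $\bigcup_{n=M}^N G_n$ and satisfies $\int f\,d\mu=A_N-A_{M-1}$ and $\int f^2\,d\mu=\sum_{n,m=M}^N\mu(G_n\cap G_m)\le B_N$. Applying Cauchy--Schwarz to the pair $f$ and $\mathbbm{1}_{\{f>0\}}$ gives
\[\mu\Big(\bigcup_{n=M}^N G_n\Big)\ \ge\ \frac{\big(\int f\,d\mu\big)^2}{\int f^2\,d\mu}\ \ge\ \frac{(A_N-A_{M-1})^2}{B_N}.\]
Letting $N\to\infty$, and using that $A_{M-1}$ is a fixed finite number while $A_N\to\infty$ so that $(1-A_{M-1}/A_N)^2\to 1$, I would conclude
\[\mu\Big(\bigcup_{n=M}^\infty G_n\Big)\ \ge\ \limsup_{N\to\infty}\frac{(A_N-A_{M-1})^2}{B_N}\ =\ \limsup_{N\to\infty}\frac{A_N^2}{B_N}\]
for every $M\in\N$. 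Finally, the sets $\bigcup_{n\ge M}G_n$ decrease to $\limsup_{n\to\infty}G_n$ as $M\to\infty$, so since $\mu(X)<\infty$, continuity of $\mu$ from above yields $\mu(\limsup_n G_n)=\lim_M\mu\big(\bigcup_{n\ge M}G_n\big)\ge\limsup_N A_N^2/B_N$, which is the asserted inequality.

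The step that genuinely requires care is passing to \emph{tail} indices in the second paragraph: running Cauchy--Schwarz on $\sum_{n=1}^N\mathbbm{1}_{G_n}$ would only bound $\mu\big(\bigcup_{n=1}^\infty G_n\big)$ from below, which is far too weak for the applications. Estimating $\mu\big(\bigcup_{n=M}^N G_n\big)$ instead is what makes the $\limsup$ set appear, and it costs nothing in the final bound precisely because $A_{M-1}$ is negligible against $A_N\to\infty$ — this is exactly where the divergence hypothesis $\sum\mu(G_n)=\infty$ is used — while the restricted double sum only shrinks, $\sum_{n,m=M}^N\mu(G_n\cap G_m)\le B_N$. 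Beyond that, the only things to verify are the elementary ones: that $f$ lies in $L^2(\mu)$ (it is bounded and $\mu(X)<\infty$), that $f$ vanishes off $\bigcup_{n=M}^N G_n$, and that the degenerate cases for $B_N$ are harmless.
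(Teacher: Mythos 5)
The paper gives no proof of this lemma --- it is cited verbatim from Harman's \emph{Metric Number Theory}, Lemma 2.3 --- so there is no argument in the paper to compare against. Your proof is the standard one: apply Cauchy--Schwarz to the tail counting function $f=\sum_{n=M}^N\mathbbm{1}_{G_n}$ and the indicator of its support to get $\mu(\bigcup_{n=M}^N G_n)\ge(\int f)^2/\int f^2$, observe that dropping the first $M-1$ terms is harmless in the limit because $A_N\to\infty$, and then pass to $\limsup_n G_n$ by continuity from above (this is where $\mu(X)<\infty$ is genuinely needed). All steps check out; in particular, the manipulation $\limsup_N (A_N-A_{M-1})^2/B_N=\limsup_N A_N^2/B_N$ is justified since $(1-A_{M-1}/A_N)^2\to 1$, and you correctly note that the key move is estimating the measure of the \emph{tail} union rather than the full union. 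One tiny simplification: the case $B_N=\infty$ never arises, since $B_N$ is a finite sum of terms each bounded by $\mu(X)<\infty$, so that paragraph of caveats can be shortened; otherwise the argument is complete and correct.
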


In the following lemmas, an interval $\openint \subseteq \R$ \emph{reduced modulo 1} means the set  $\big\{ \{x\} \ \big| \ x \in J \big\} \subseteq [0,1)$.

\begin{lemma}\label{lem:discrepofnalpha}
Let $\openint \subseteq \R$ be a non-empty, open interval, and let $\openint' \subseteq [0,1)$ be $\openint$ reduced modulo 1.  Let $\alpha \in \R$. For all $H \in \N$ for which $\{\alpha, 2 \alpha, \ldots, H \alpha \} \cap \Z = \emptyset$ and all $L \in \N$,
\[\sum_{\ell = 1}^L \big[ \fracpart {\al \ell} \in \openint' \big] \leq L \lambda(\openint) + \frac {2L}{H+1} + 6 \sum_{h=1}^H \frac{1}{h \fracdist {h \al}}.\]
\end{lemma}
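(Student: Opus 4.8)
The plan is to run the classical Fourier-analytic (Erd\H{o}s--Tur\'an) argument: bound the left-hand side above by the sum over $\ell$ of a trigonometric polynomial of degree at most $H$ that majorizes the indicator function of $\openint'$, and then estimate the resulting geometric exponential sums.

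First I would dispose of the trivial case $\lambda(\openint) \ge 1$, in which $L\lambda(\openint) \ge L$ already exceeds the left-hand side; so assume $\lambda(\openint) < 1$. Then reduction modulo $1$ is injective on $\openint$, so $\openint'$ is an arc of length $\beta := \lambda(\openint)$ on the circle $\R/\Z$, which we identify with $[0,1)$; write $e(x) = e^{2\pi i x}$. The hypothesis $\{\al, 2\al, \ldots, H\al\} \cap \Z = \emptyset$ enters exactly once: it guarantees $\fracdist{h\al} > 0$, equivalently $e(h\al) \ne 1$, for every $1 \le h \le H$, so that the quantities $1/(h\fracdist{h\al})$ on the right-hand side are finite.

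The main step is to fix a one-sided Selberg--Beurling majorant of the arc: a real trigonometric polynomial $\Psi(x) = \sum_{|h| \le H} c_h\, e(hx)$ with $\Psi(x) \ge \big[ x \in \openint' \big]$ for all $x$, with $c_0 = \beta + 1/(H+1)$, and with $|c_h| \le 1/(H+1) + 1/(\pi h)$ for $1 \le |h| \le H$. Such a polynomial is standard --- one obtains it by convolving the indicator of a slightly enlarged arc with a Fej\'er kernel of degree comparable to $H$, or simply quotes it from the literature on the Erd\H{o}s--Tur\'an inequality. With it in hand,
\[\sum_{\ell=1}^L \big[ \fracpart{\al \ell} \in \openint' \big] \ \le\ \sum_{\ell=1}^L \Psi(\al \ell) \ =\ c_0 L \, + \sum_{1 \le |h| \le H} c_h \sum_{\ell=1}^L e(h\al\ell).\]
The $h=0$ term is $c_0 L \le L\lambda(\openint) + 2L/(H+1)$. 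For each $h$ with $1 \le |h| \le H$ the inner sum is geometric with ratio $e(h\al) \ne 1$, so $\big| \sum_{\ell=1}^L e(h\al\ell) \big| \le 2/|e(h\al)-1| = 1/|\sin(\pi h\al)| \le 1/(2\fracdist{h\al})$, by the elementary bound $|\sin \pi x| \ge 2\fracdist x$. Pairing $h$ with $-h$ and using $|c_h| \le (1 + 1/\pi)/h$ for $1 \le h \le H$ (since $1/(H+1) \le 1/h$ there), the nonzero frequencies contribute at most $(1 + 1/\pi)\sum_{h=1}^H 1/(h\fracdist{h\al}) \le 6\sum_{h=1}^H 1/(h\fracdist{h\al})$; adding the two contributions gives the claim.

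I do not anticipate a genuine obstacle: the argument is routine once the majorant is available. The only mild care needed is to pin down the majorant's Fourier coefficients precisely enough to land the clean constants $2$ and $6$ of the statement (both generous) instead of unspecified implied constants --- or, equivalently, to quote a version of the Erd\H{o}s--Tur\'an inequality with explicit constants and feed the geometric-sum bound $\big| \sum_{\ell \le L} e(h\al\ell) \big| \le 1/(2\fracdist{h\al})$ into it.
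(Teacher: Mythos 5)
Your proof is correct and amounts to the same argument the paper relies on: the paper simply cites the Erd\H{o}s--Tur\'an inequality (Theorem 5.5 of Harman's book, applied to the at most two intervals making up $\openint'$ in $[0,1)$), whereas you reprove that inequality from scratch via a Selberg--Beurling majorant, handling the wraparound by working with a single arc on $\R/\Z$ instead of splitting into two intervals. Your constants ($c_0 L \le L\lambda(\openint) + 2L/(H+1)$, the geometric-sum bound $1/(2\fracdist{h\al})$, and $1 + 1/\pi < 6$) are comfortably within the generous bounds in the statement, so no gap here.
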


\begin{proof}
Note that $\openint'$ is a union of at most two intervals in $[0,1)$. Apply \cite[Theorem 5.5]{harmanbook} to the sequence $(\{\al \ell\})_{\ell=1}^L$ for each of these intervals, using the remarks on pages 130 and 131 in \cite{harmanbook}.
\end{proof}

The following lemma is similar to, but does not follow from the statement of, \cite[Lemma 6.2]{harmanbook} and concerns the number of solutions to the inequality $|qr-ps| \leq L$. Three aspects of this lemma will be most important in its application to the lemma following it: 1) the implicit constant in the conclusion is independent of the length of the interval $J$; 2) the interval $J$ is permitted to contain $0$; and 3) the lack of restriction on $s$ leading to the asymmetry between the conditions on $r/p$ and $s/q$.

\begin{lemma}\label{lem:basicsolutionstoinequality}
Let $p, Q \in \N$, $p$ an odd prime less than $Q$, $L \geq 1$, $\bigQ \subseteq \{Q, \ldots, 2Q-1\} \setminus p \Z$, and $\openint \subseteq \R$ be a non-empty, open interval with length $\lambda(\openint) > p^{-1}$. The number of tuples $(q,r,s) \in \Z^3$ satisfying
\[q \in \bigQ, \ r \big/ p \in \openint, \ |qr-ps| \leq L\]
is
\[ \ll \lambda(\openint)L|\bigQ| + Q (\log Q)^2.\]
\end{lemma}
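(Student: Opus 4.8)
The plan is to count the tuples $(q,r,s)$ by first fixing the pair $(q,r)$ and then counting the number of admissible $s$, and finally summing over $(q,r)$. For fixed $q \in \bigQ$ and $r$ with $r/p \in \openint$, the inequality $|qr - ps| \leq L$ confines $s$ to an interval of length $2L/p$; since $s$ ranges over integers, there are $\ll 1 + L/p \ll L/p$ choices (using $L \geq 1$ and $p < Q$), unless this interval is empty. The key point, however, is that for many pairs $(q,r)$ there is \emph{no} integer $s$ in range at all: an integer $s$ with $|qr-ps|\le L$ exists exactly when $qr$ lies within $L$ of a multiple of $p$, i.e. when $\|qr/p\| \leq L/p$. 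So I would split the count into a ``main term'' coming from this $O(L/p)$-per-pair bound applied only to those $(q,r)$ for which $qr \equiv t \pmod p$ with $|t| \le L$ (interpreting $t$ in $(-p/2,p/2]$), plus care at the boundary.

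The heart of the argument is estimating, for each fixed $q \in \bigQ \setminus p\Z$, the number of integers $r$ with $r/p \in \openint$ and $qr \bmod p$ lying in a prescribed residue window of width $\ll L$. Since $q$ is invertible mod $p$, as $r$ runs over any block of $p$ consecutive integers, $qr \bmod p$ runs over all residues exactly once; hence among the $r$ with $r/p \in \openint$ — an interval of length $p\lambda(\openint) > 1$ in the $r$-variable — the number with $qr$ in a window of $\ll L$ residues is $\ll (1 + p\lambda(\openint)) \cdot (1 + L)/p \ll \lambda(\openint)(1+L) + L/p \ll \lambda(\openint) L + 1$, where I use $\lambda(\openint) > p^{-1}$ and $L \ge 1$ to absorb lower-order terms. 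Summing over the $|\bigQ|$ values of $q$ a further factor $L/p$ for the count of $s$ — wait, this double-counts the width — so more carefully: for fixed $q$, the number of pairs $(r,s)$ with $r/p\in\openint$, $|qr-ps|\le L$ is $\ll (\#\{r: r/p\in\openint\}) \cdot (L/p + 1)$ only when weighting correctly; instead I bound directly $\#\{(r,s)\}$ for fixed $q$ by noting $|qr - ps| \le L$ with $r/p \in \openint$ means $(r,s)$ lies in a thin parallelogram, giving $\ll \lambda(\openint)\, p \cdot (L/p) + (\text{boundary}) = \lambda(\openint)L + O(\lambda(\openint) p + L/p + 1)$ lattice points by a standard convex-body / Lipschitz-domain count, and the boundary term is $\ll \lambda(\openint) p + L/p + 1$. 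Summing over $q \in \bigQ$ gives $\ll \lambda(\openint) L |\bigQ| + \lambda(\openint) p |\bigQ| + (L/p + 1)|\bigQ|$, and since $\lambda(\openint) > p^{-1}$ and $|\bigQ| \le Q$ and $p < Q$ this is $\ll \lambda(\openint) L|\bigQ| + \lambda(\openint) p Q + Q$.

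The remaining difficulty — and the reason the clean bound $\lambda(\openint) p Q$ is not good enough — is that $\lambda(\openint)$ may be large (up to order $Q/p$ or more), so $\lambda(\openint) p Q$ could dominate; this is where the primality of $p$ and the restriction $q \notin p\Z$ must be used more cleverly, via a reciprocal-sum estimate. The correct treatment is to not bound the per-$q$ count so crudely but to observe that the total number of $(q,r,s)$ with $|qr-ps|\le L$ and the given constraints is controlled by summing over the value $d := qr - ps$, $|d|\le L$: for each $d$ and each $s$, the pair $(q,r)$ satisfies $qr = ps + d$, so $q$ must be a divisor of $ps+d$ lying in $\bigQ$, and $r = (ps+d)/q$. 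The number of divisors of $ps+d$ in the dyadic range $[Q,2Q)$ contributes, after summing over the $\ll \lambda(\openint) p$ relevant values of $s$ (those with $(ps+d)/q \big/ p \in \openint$ for some $q\in[Q,2Q)$, i.e. $s$ in an interval of length $\ll \lambda(\openint) Q$) and the $\ll L$ values of $d$, a total of $\ll L \cdot \lambda(\openint) Q \cdot \overline{\tau}$ where $\overline\tau$ is an average divisor count — this is where the $(\log Q)^2$ enters, from $\sum_{m \le X} \tau(m) \ll X \log X$ together with one more logarithmic loss from the dyadic/coprimality bookkeeping. I expect the main obstacle to be organizing this divisor-sum estimate so that the $Q(\log Q)^2$ term appears \emph{additively} rather than multiplying the main term $\lambda(\openint) L |\bigQ|$; the standard device is to separate the ``diagonal'' contribution $d$ making $ps+d$ share a large factor with the near-optimal $q$ from the generic contribution, exactly as in the proof of \cite[Lemma 6.2]{harmanbook}, and the asymmetry noted in the statement (the condition on $r/p$ but not on $s/q$) is what makes the one-sided $s$-range count work cleanly.
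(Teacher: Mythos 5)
Your per-$q$ count in the second paragraph contains an error that destroys the argument. You write that for fixed $q$, the number of $r$ with $r/p \in J$ and $qr \bmod p$ lying in a window of $\ll L$ residues is $\ll \lambda(J)L + 1$; that ``$+1$'' is wrong. As $r$ runs over the $\approx p\lambda(J)$ integers of $pJ$, each block of $p$ consecutive $r$'s contributes exactly $w \approx 2L$ hits, so the count is $\lambda(J)w + O(w) = \lambda(J)L + O(L)$ --- the boundary error is $O(L)$, not $O(1)$, because the leftover fragment at the end of $pJ$ can contain up to $w$ good residues. Summing over $q$ then gives $\lambda(J)L|\bigQ| + L|\bigQ|$, and the error $L|\bigQ|$ is genuinely too large: in the relevant range $p^{-1} < \lambda(J) < 1/2$ (the easy case $\lambda(J)\geq 1/2$ is fine either way) one can have $L$ of order $p$ and $|\bigQ|$ of order $Q/\log Q$, making $L|\bigQ|$ of order $pQ/\log Q$, far above $Q(\log Q)^2$ when $p$ is a power of $Q$. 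The whole content of the lemma is to beat this trivial boundary error.

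Your third paragraph then pivots to a divisor-sum route, but the ``$s$ in an interval of length $\ll \lambda(J)Q$'' step does not hold: the constraint $r=(ps+d)/q \in pJ$ forces $s \in qJ - d/p$, an interval \emph{depending on $q$}, and the union $\bigcup_{q\in[Q,2Q)} qJ$ has measure of order $Q(\sup J) + Q\lambda(J)$, which is not $\ll Q\lambda(J)$ when $J$ sits far from the origin. Even setting that aside, counting divisors of $ps+d$ does not naturally separate into a main term $\lambda(J)L|\bigQ|$ plus an additive $Q(\log Q)^2$ --- you correctly identify this as the obstacle, but you do not resolve it.

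The idea you are missing is the one the paper uses: write $\ell = qr - ps$, so $r \equiv \bar q \ell \pmod p$ with $\bar q$ the inverse of $q$ mod $p$, and observe that counting good $(q,\ell)$ amounts to asking how often the rotation $\ell \mapsto \{\bar q \ell / p\}$ falls in (the reduction mod $1$ of) $J$. The Erd\H{o}s--Tur\'an discrepancy inequality (Lemma~\ref{lem:discrepofnalpha}) with $H = \lfloor \lambda(J)^{-1}\rfloor < p$ turns the per-$q$ boundary error $O(L)$ into $\ll \sum_{h\leq H} 1/(h\|h\bar q/p\|)$, which after interchanging sums and using the fact that $q\mapsto v_q$ (where $\|h\bar q/p\| = v_q/p$) is $O(1+Q/p)$-to-one, gives $\sum_{q\in\bigQ}\sum_{h\leq H} \ll Q(\log Q)^2$. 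That discrepancy-plus-Kloosterman-sum mechanism is the essential ingredient, and it is absent from your proposal.
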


\begin{proof}
Since $L \geq 1$, by replacing $L$ with $\bothceil L$, we may assume that $L \in \N$. We wish to bound from above the cardinality of the set
\[ \bigS = \big\{ (q,r,s,\ell) \in \Z^4 \ \big | \ q \in \bigQ, \ r \big/ p \in \openint, \ |\ell | \leq L, \ qr-ps=\ell \big\}.\]
For $(q,r,s,\ell) \in \bigS$,
\begin{align}\label{eqn:importantcongruence}r \equiv \overline q \ell \pmod p,\end{align}
where $\overline q$ denotes the positive integer less than $p$ for which $\overline q q \equiv 1 \pmod p$.

If $\lambda(\openint) \geq 1 \big/ 2$, then for fixed $q \in \bigQ$ and $|\ell|\leq L$, the congruence (\ref{eqn:importantcongruence}) together with $r \in p\openint \cap \Z$ implies that there are $\ll \lambda (\openint)$ choices for $r$ for which $(q,r,s,\ell) \in \bigS$. Since $q$, $r$, and $\ell$ determine $s$, we have $|\bigS| \ll \lambda(\openint)L|\bigQ|$.

Suppose $\lambda(\openint) < 1 \big/ 2$. The congruence (\ref{eqn:importantcongruence}) together with $r \big/ p \in \openint$ gives
\[\fracpart {\frac {\overline q \ell}p} = \fracpart {\frac rp} \in \openint',\]
where $J' = \big\{ \{x\} \ \big| \ x \in J \big\} \subseteq [0,1)$ is $J$ reduced modulo 1. It follows that if $(q,r,s,\ell) \in \bigS$, then $(q, \ell) \in \bigS'$, where
\[\bigS' = \big\{ (q,\ell) \in \Z^2 \ \big | \ q \in \bigQ, \ |\ell | \leq L, \ \fracpart {\overline q \ell \big/ p} \in \openint' \big\}.\]
Since $\lambda(\openint) < 1 \big/ 2$, for fixed $q \in \bigQ$ and $|\ell|\leq L$, there is at most 1 choice for $r$ for which there exists an $s$ such that $(q,r,s,\ell) \in \bigS$. Since $q,r,\ell$ determine $s$, the map $(q,r,s,\ell) \mapsto (q,\ell)$ from $\bigS$ into $\bigS'$ is injective, meaning $|\bigS| \leq |\bigS'|$. We proceed by bounding $|\bigS'|$ from above.

Setting $[\textsc{expression}]$ to 1 if \textsc{expression} is true and 0 otherwise,
\begin{align*}
|\bigS'| &= \sum_{q \in \bigQ} \sum_{\ell = -L}^L \left[ \fracpart {\frac {\overline q \ell}p} \in \openint' \right]\\
&\leq \sum_{q \in \bigQ} \left( 1+ \sum_{\ell = 1}^L \left[ \fracpart {\frac {\overline q \ell}p} \in \openint' \right] + \sum_{\ell = 1}^L \left[ \fracpart {\frac {- \overline q \ell}p} \in \openint' \right] \right).
\end{align*}
Since $|\bigQ| \ll Q( \log Q)^2$ and since the bound on the third term follows exactly as the bound on the second, it suffices to show
\begin{align}\label{eqn:targetboundonsecondsummand} \sum_{q \in \bigQ} \sum_{\ell = 1}^L \left[ \fracpart {\frac {\overline q \ell}p} \in \openint' \right] \ll \lambda(\openint)L|\bigQ| + Q (\log Q)^2. \end{align}

Applying Lemma \ref{lem:discrepofnalpha} with $H = \bothfloor {\lambda(\openint)^{-1}}$ and $\alpha = \overline q \big / p$,
\[\sum_{\ell = 1}^L \left[ \fracpart {\frac {\overline q \ell}p} \in \openint' \right] \ll \lambda(\openint) L + \sum_{h=1}^H \frac 1{h \fracdist {h \overline q \big / p}}, \]
where we used $\lambda(\openint) \in (p^{-1},1 / 2)$ to give that $2 \leq H < p$ and $H^{-1} \ll \lambda(\openint)$. This choice of $H$ satisfies the conditions in Lemma \ref{lem:discrepofnalpha}: $h \overline q \big / p \notin \Z$ since $\overline q, h \in \{1, \ldots, p-1\}$. When we sum this expression over $q \in \bigQ$, the first term on the right hand side of (\ref{eqn:targetboundonsecondsummand}) follows immediately. Interchanging the order of summation in the second term, the second term on the right hand side of (\ref{eqn:targetboundonsecondsummand}) would follow from
\begin{align}\label{eqn:twoboundstoshow} \sum_{q \in \bigQ} \frac 1{\fracdist {h \overline q \big / p}} \ll Q \log Q, \text{ and } \sum_{h=1}^H h^{-1} \ll \log Q.\end{align}

The second inequality in (\ref{eqn:twoboundstoshow}) follows from the fact that $\lambda(\openint) > Q^{-1}$. For the first inequality, note that for each $q \in \bigQ$, there exists $v_q \in \{1, \ldots (p-1)/2\}$ for which $\fracdist {h \overline q \big / p} = v_q \big / p$. Since $v_{q} = v_{q'}$ only if $\overline q \equiv \pm \overline {q'} \pmod p$, the map $q \mapsto v_q$ is at most $2(1+ Q \big/ p)$-to-1. It follows that
\[\sum_{q \in \bigQ} \frac 1{\fracdist {h \overline q \big / p}} \ll 2 \left(1+ \frac {Q}p \right)  \sum_{v=1}^{\frac {p-1}2} \frac pv \ll Q \log Q,\]
which completes the proof of (\ref{eqn:twoboundstoshow}) and the proof of the lemma.
\end{proof}



The following lemma is at the heart of Theorem \ref{thm:mainthm} and concerns counting solutions to a perturbation of the inequality $|qr-ps| \leq L$.

\begin{lemma}\label{lem:hardersolutionstoinequality}
Let $p, Q \in \N$, $p$ an odd prime less than $Q$, $L \geq 1$, $\bigQ \subseteq \{Q, \ldots, 2Q-1\} \setminus p \Z$, and $\openint \subseteq \R$ be a non-empty, open interval. Suppose $\big\{\cent_n: \overline{\openint} \to \R\big\}_{n \in \{p\} \cup \bigQ}$ is a collection of $C^1$ functions satisfying
\begin{align}\label{eqn:etainequality}\omega := \frac{1}{10} \min_{n \in \{p\} \cup \bigQ} \left( \lambda(\openint), \frac L{Q \supnormopenint {\cent_n'}}\right) \geq 10 \max_{n \in \{p\} \cup \bigQ} \left( \frac 1p, \frac L{pQ}, \frac {\supnormopenint {\cent_n}}n \right).\end{align}
The number of tuples $(q,r,s) \in \Z^3$ satisfying
\[q \in \bigQ, \ \frac rp, \ \frac sq \in \openint, \ \big|q\big(r-\cent_p(r/p)\big)-p\big(s-\cent_q(s/q)\big)\big| \leq L\]
is
\[\ll \lambda(\openint) \left( L|\bigQ| + \omega^{-1} Q (\log Q)^2 \right).\]
\end{lemma}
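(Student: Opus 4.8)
The plan is to reduce the perturbed inequality to the unperturbed one of Lemma~\ref{lem:basicsolutionstoinequality} by a change of variables, at the cost of enlarging the interval $\openint$ by a controlled amount and replacing $L$ by a constant multiple of itself. Concretely, for each $n \in \{p\} \cup \bigQ$ the map $t \mapsto t - \cent_n(t)$ is a $C^1$ perturbation of the identity on $\overline{\openint}$; condition~(\ref{eqn:etainequality}) forces $\supnormopenint{\cent_n'}$ to be small relative to $L/Q$, and more importantly it should guarantee (via $\omega \leq \lambda(\openint)/10$ together with $\supnormopenint{\cent_n'} \leq L/(10\omega Q) \ll 1$) that this map is a bi-Lipschitz homeomorphism from $\overline{\openint}$ onto an interval $\openint_n$ whose length differs from $\lambda(\openint)$ by at most a factor of, say, $2$, and which is contained in a fixed enlargement $\openint^*$ of $\openint$ with $\lambda(\openint^*) \ll \lambda(\openint)$. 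The condition $\big|q(r-\cent_p(r/p)) - p(s-\cent_q(s/q))\big| \leq L$ then says precisely that the integer pair $(q, r', s')$ with $r' = r - \cent_p(r/p) \cdot$(nothing — wait, $r$ is an integer, so this needs care); the right reformulation is that $r/p$ and $s/q$ lie in $\openint$ while $r/p - \cent_p(r/p)/p$ and $s/q - \cent_q(s/q)/q$ satisfy $|q \cdot p(r/p - \cent_p(r/p)/p) - p \cdot q(s/q - \cent_q(s/q)/q)| \leq L$, i.e. the \emph{real} numbers $\tilde r := p(r/p - \cent_p(r/p)/p)$ and $\tilde s := q(s/q - \cent_q(s/q)/q)$ obey $|q\tilde r - p \tilde s| \leq L$.

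The key point is that $\tilde r$ and $\tilde s$ are no longer integers, so Lemma~\ref{lem:basicsolutionstoinequality} does not apply directly; instead I would round. Writing $r^\circ = \bothfloor{\tilde r + 1/2}$ and similarly $s^\circ$, we have $|\tilde r - r^\circ| \leq 1/2$ and $|\tilde s - s^\circ| \leq 1/2$, so $|q r^\circ - p s^\circ| \leq L + (q + p)/2 \ll L + Q \ll L$ (the last step using $Q \ll pQ/L \cdot (L/p) $ — rather, using $\omega \geq 10 L/(pQ)$ which gives $L \leq pQ\omega/10$, hence $Q \leq p Q \omega /(10 L) \cdot (Q/ \,?)$... the cleanest route is $Q \leq \lambda(\openint)^{-1} \cdot \lambda(\openint) Q$ and $\lambda(\openint) \geq 10\omega \geq 100/p$, so $Q \leq pQ\lambda(\openint)/100$, but we want a bound by $L$; note $L \geq 1$ and from (\ref{eqn:etainequality}), $\omega \geq 10 L/(pQ)$, i.e. $L \leq \omega p Q /10 \leq \lambda(\openint) pQ/100$; conversely one checks $Q \ll L$ is \emph{not} generally true, so instead I absorb $Q$ into the final $Q(\log Q)^2$ error term). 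Thus $(q, r^\circ, s^\circ)$ satisfies $q \in \bigQ$, $|q r^\circ - p s^\circ| \leq C L$ for an absolute constant $C$, and $r^\circ / p$ lies in the enlarged interval $\openint^*$. Applying Lemma~\ref{lem:basicsolutionstoinequality} to $\openint^*$ (whose length exceeds $p^{-1}$ because $\lambda(\openint) \geq 10\omega \geq 100 p^{-1}$) bounds the number of such $(q, r^\circ, s^\circ)$ by $\ll \lambda(\openint^*) L |\bigQ| + Q(\log Q)^2 \ll \lambda(\openint)\big(L|\bigQ| + Q(\log Q)^2\big)$.

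What remains is to control the fibers of the map $(q,r,s) \mapsto (q, r^\circ, s^\circ)$: given $q, r^\circ, s^\circ$, how many original $(q,r,s)$ can map to it? Since $r$ is an integer and $r/p - \cent_p(r/p)/p$ is within $1/2 \cdot p^{-1}$... no: $\tilde r$ determines $r^\circ$ up to the rounding, and the inverse map $\openint_p \to \openint$, being bi-Lipschitz with Lipschitz constant $\ll 1$, shows that the preimage of $[r^\circ - 1/2, r^\circ+1/2]/1$ under $t \mapsto p(t - \cent_p(t)/p)$... the honest statement is that $r \mapsto \tilde r = p \cdot \big( (r/p) - \cent_p(r/p)/p \big)$ is monotone with derivative bounded away from $0$ and $\infty$ (derivative in $r$ is $1 - \cent_p'(r/p)/p$, which is between $1/2$ and $3/2$ say), so distinct integers $r, r''$ with the same $r^\circ$ would need $|\tilde r - \tilde r''| \leq 1$, forcing $|r - r''| \leq 2$; combined with the fact that the rounded value $r^\circ$ together with the monotonicity pins $r$ down to $O(1)$ choices, and likewise for $s$. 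Hence the map is $O(1)$-to-one and the bound transfers. \textbf{The main obstacle} I anticipate is not the change of variables itself but the careful bookkeeping in (\ref{eqn:etainequality}): one must extract from that single inequality all of (i) $\supnormopenint{\cent_n'} \ll 1$ so the maps are invertible, (ii) $\supnormopenint{\cent_n} \ll n \ll Q$ so that $\cent_n(r/p)$ shifts $\tilde r$ by at most a bounded multiple of $\lambda(\openint) p$, keeping $r^\circ/p$ inside a fixed enlargement $\openint^*$ with $\lambda(\openint^*)\ll\lambda(\openint)$, and (iii) $\lambda(\openint^*) > p^{-1}$ and $L + Q \ll \omega^{-1} \lambda(\openint) Q$, matching the claimed error term — in particular the factor $\omega^{-1}$ in the conclusion is exactly what pays for replacing $Q(\log Q)^2$ by $\omega^{-1}Q(\log Q)^2$ after the interval enlargement, so I need to track where the loss of $\omega^{-1}$ (as opposed to a worse power) comes in, which I expect is from $\lambda(\openint^*)/\lambda(\openint) \ll 1$ being in fact bounded and the extra $\omega^{-1}$ coming instead from a dyadic-type decomposition of $\openint$ into $\ll \lambda(\openint)/\omega$ subintervals of length $\asymp \omega$ on each of which the perturbation is nearly constant, applying the unperturbed lemma on each and summing. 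That decomposition, rather than a single global change of variables, is likely the actual intended route, and reconciling the two approaches is where the care is needed.
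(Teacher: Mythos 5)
Your proposal has a genuine gap at its center, which you partly recognize but do not resolve. The rounding route does not close: writing $r^\circ$ for the nearest integer to $r - \cent_p(r/p)$ and $s^\circ$ similarly, the rounding error propagates as $|qr^\circ - ps^\circ| \leq L + (q+p)/2 \ll L + Q$, and $Q$ is in no way controlled by $L$ (take $L = 1$, $Q$ huge). You cannot ``absorb $Q$ into the final $Q(\log Q)^2$ error term,'' because that $Q$ enters Lemma~\ref{lem:basicsolutionstoinequality} as the parameter $L$: feeding $L+Q$ into that lemma produces a term $\lambda(\openint)(L+Q)|\bigQ|$, and the spurious $\lambda(\openint) Q|\bigQ|$ piece is much larger than what the statement claims. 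There is also a secondary issue you wave at: after rounding, $r^\circ$ together with the inequality determines $s^\circ$ only up to the interval $[\,(qr^\circ - CL)/p,\ (qr^\circ + CL)/p\,]$, so the fibers of $(q,r,s) \mapsto (q,r^\circ,s^\circ)$ are $O(1)$-to-one only after you have already pinned down the structure; the monotone-derivative argument you sketch handles $r$, but the map from $(r,s)$ to $(r^\circ, s^\circ)$ needs both coordinates controlled, and nothing in the rounding per se ties $s^\circ$ tightly to $s$ independently of $r$.

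You correctly guess at the very end that the intended route is to cover $\openint$ by $\ll \lambda(\openint)/\omega$ subintervals of length $\asymp \omega$, and that this is where the $\omega^{-1}$ factor comes from. What is missing is the mechanism that converts near-constancy of the perturbation on each subinterval into an application of Lemma~\ref{lem:basicsolutionstoinequality}, and the paper's device here is a \emph{differencing} argument, not a change of variables. Fix a subinterval $\openint_k$ of length $\omega/5$ and restrict $r/p$ to $\openint_k$ (note $s/q$ is allowed to range over all of $\openint$, which is why the cover is indexed only by the location of $r/p$). For each $q$, pick one solution $(q, r_q, s_q)$ with $r_q/p \in \openint_k$. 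If $(q,r,s)$ is any other solution with $r/p \in \openint_k$, then the pair of differences $r_0 = r - r_q$, $s_0 = s - s_q$ satisfies an \emph{unperturbed} inequality: by the MVT,
\[
\big| q\big(\cent_p(r/p) - \cent_p(r_q/p)\big) - p\big(\cent_q(s/q) - \cent_q(s_q/q)\big)\big| \ \leq\ q\,\tfrac{\omega}{5}\supnormopenint{\cent_p'} + p\,\omega\supnormopenint{\cent_q'} \ \leq\ L,
\]
so the triangle inequality gives $|q r_0 - p s_0| \leq 3L$, while $r_0/p \in (-\omega, \omega)$ and one checks via (\ref{eqn:etainequality}) that $|s/q - s_q/q| < \omega$. (Verifying $|s/q - s_q/q| < \omega$ uses the $L/(pQ)$ and $\supnormopenint{\cent_n}/n$ bounds in (\ref{eqn:etainequality}), and this is also where those hypotheses earn their keep.) Since $(q,r,s) \mapsto (q, r - r_q, s - s_q)$ is injective, the count on $\openint_k$ is at most the number of integer triples with $q \in \bigQ$, $r_0/p \in (-\omega,\omega)$, $|qr_0 - ps_0| \leq 3L$, and Lemma~\ref{lem:basicsolutionstoinequality} applied to the interval $(-\omega,\omega)$ with $3L$ in place of $L$ gives $\ll \omega L |\bigQ| + Q(\log Q)^2$. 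Summing over the $\ll \lambda(\openint)/\omega$ subintervals yields the claimed bound. So: same decomposition as you propose, but the engine is differencing-and-injectivity rather than rounding-and-fiber-counting, and the differencing is what makes the perturbation disappear cleanly on each subinterval without introducing an uncontrolled $+Q$ error.
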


\begin{proof}
Since $L \geq 1$, by replacing $L$ with $\bothceil L$, we may assume that $L \in \N$. We wish to bound the cardinality of the set
\[ \bigT = \left\{ (q,r,s) \in \Z^3 \ \big | \ q \in \bigQ, \ \frac rp, \ \frac sq \in \openint, \ \big|q\big(r-\cent_p(r/p)\big)-p\big(s-\cent_q(s/q)\big)\big| \leq L \right\}.\]

Let $\{\openint_k\}_{k \in K}$ be a collection of $|K| = \bothfloor {10 \lambda(\openint) \omega^{-1}}$ open intervals in $J$, each of length $\omega \big/ 5$, covering $\openint$. For each $k \in K$, denote by $\bigT_k$ the set
\[\left\{ (q,r,s) \in \Z^3 \ \big | \ q \in \bigQ, \ \frac rp \in \openint_k, \ \frac sq \in \openint, \ \big|q\big(r-\cent_p(r/p)\big)-p\big(s-\cent_q(s/q)\big)\big| \leq L \right\},\]
and note that since $s \big/ q$ is required only to be in $\openint$, $\bigT \subseteq \cup_{k \in K} \bigT_k$. Therefore, it suffices to prove that for all $k \in K$,
\begin{align}\label{eqn:inequalityforsolutionsonsmallinterval}| \bigT_k | \ll \omega L|\bigQ| + Q (\log Q)^2.\end{align}

Fix $k \in K$, and suppose $(q, r_1, s_1)$, $(q, r_2, s_2) \in \bigT_k$. Putting $r_0 = r_1 - r_2$ and $s_0 = s_1 - s_2$, we will show that $(q, r_0, s_0) \in \bigT_0$, where
\[ \bigT_0 = \big\{ (q,r_0,s_0) \in \Z^3 \ \big | \ q \in \bigQ, \ r_0 \big/ p \in (-\omega,\omega), \ \big|qr_0-ps_0\big| \leq 3L \big\}.\]
To see that $(q, r_0, s_0) \in \bigT_0$, note first that since $\openint_k$ is an open interval of length $\omega \big/ 5$, $\big| r_1 / p - r_2 / p \big| < \omega \big/ 5$, meaning $r_0 \big/ p \in (-\omega,\omega)$. Moreover, $\big| s_1 / q - s_2 / q \big| < \omega$: for $i=1,2$,
\[\left| \frac{r_i-\cent_p(r_i/p)}{p}-\frac{s_i-\cent_q(s_i/q)}{q} \right| = \left| \frac{r_i}{p}-\frac{s_i}{q} - \left( \frac{\cent_p(r_i/p)}{p}-\frac{\cent_q(s_i/q)}{q}\right) \right| \leq \frac {L}{pQ},\]
whereby it follows from several applications of the triangle inequality and (\ref{eqn:etainequality}) that
\begin{align*}
\left| \frac{s_1}{q}-\frac{s_2}{q} \right| &\leq \left| \frac{s_1}{q} - \frac{r_1}{p} \right| + \left| \frac{r_1}{p}-\frac{r_2}{p} \right| + \left| \frac{r_2}{p}-\frac{s_2}{q} \right|\\
&< 2 \left( \frac {L}{pQ} + \frac {\supnormopenint {\cent_p}}{p} + \frac {\supnormopenint {\cent_q}}{q} \right) + \frac{\omega}5 \leq 2 \frac{3\omega}{10} + \frac \omega 5 < \omega.
\end{align*}
Finally, by the triangle inequality, the MVT, and (\ref{eqn:etainequality}),
\begin{align*}
\big|qr_0-ps_0\big| & \begin{aligned} = \big|qr_1 - ps_1 - (qr_2 - ps_2)\big| \end{aligned} \\
& \begin{aligned} \leq & \ \big|qr_1 - ps_1 - \big(q \cent_p(r_1/p) - p\cent_q(s_1/q) \big) \big| + \\ & \qquad \big| \big(q \cent_p(r_1/p) - p\cent_q(s_1/q) \big) - \big(q \cent_p(r_2/p) - p\cent_q(s_2/q) \big) \big| + \\  & \qquad  \big| \big(q \cent_p(r_2/p) - p\cent_q(s_2/q) \big) - (qr_2 - ps_2) \big| \end{aligned}\\
& \begin{aligned} = & \ \big|q\big(r_1-\cent_p(r_1/p)\big)-p\big(s_1-\cent_q(s_1/q)\big)\big| + \\ & \qquad \big| q\big(\cent_p(r_1/p) - \cent_p(r_2/p) \big) - p\big(\cent_q(s_1/q) - \cent_q(s_2/q) \big) \big| + \\  & \qquad  \big|q\big(r_2-\cent_p(r_2/p)\big)-p\big(s_2-\cent_q(s_2/q)\big)\big| \end{aligned}\\
& \begin{aligned} \leq L + q \left| \frac {r_1}p - \frac {r_2}p \right| \supnormopenint{\cent_p'} + p \left| \frac {s_1}q - \frac {s_2}q \right| \supnormopenint{\cent_q'} + L \end{aligned}\\
& \begin{aligned} \leq 2L + 3 Q \omega \max_{n \in \{p\} \cup \bigQ} \supnormopenint {\cent_n'} \leq 3L. \end{aligned}
\end{align*}
This shows that $(q, r_0, s_0) \in \bigT_0$.

In each (non-empty) fiber of the map $\bigT_k \to \Z$ defined by $(q,r,s) \mapsto q$, fix one point $(q, r_q, s_q) \in \bigT_k$. The map $\bigT_k \to \Z^3$ defined by $(q,r,s) \mapsto (q,r-r_q,s-s_q)$ is injective, and by the work above, its image lies in $\bigT_0$. It follows that $|\bigT_k| \leq |\bigT_0|$, meaning that in order to show (\ref{eqn:inequalityforsolutionsonsmallinterval}), it suffices to show
\[ | \bigT_0 | \ll \omega L|\bigQ| + Q (\log Q)^2. \]
This inequality follows by applying Lemma \ref{lem:basicsolutionstoinequality} with $p$, $Q$, and $\bigQ$ as they are, $3L$ as $L$, and $(-\omega,\omega)$ as $\openint$, noting that $2 \omega > p^{-1}$ follows from (\ref{eqn:etainequality}).
\end{proof}

\section{Proof of main theorem}\label{sec:proofofmainthm}

In this section, we state and prove the full version of Theorem \ref{thm:mainthm}. In order to properly formulate the equidistribution condition, we need the following definition.

\begin{definition}
For $n \in \N$, let $S_n \subseteq \Z$ be finite and $F_n: S_n \to \R$ be a finite sequence of elements of $\R$ indexed by $S_n$. The sequence $(F_n )_{n \in \N}$ \emph{equidistributes modulo 1} if for all intervals $I \subseteq [0,1)$,
\[\lim_{n \to \infty}\frac{1}{|S_n|} \big| \{ m \in S_n \ | \ \{F_n(m)\} \in I \} \big| = \lambda(I).\]
We will frequently denote a finite sequence $F$ indexed by $S \subseteq \Z$ by $\big(F(m)\big)_{m \in S}$.
\end{definition}

To ease notation in the statement and proof of the main theorem, any sum indexed over $p$ or $q$ will be understood to be a sum over prime numbers. For $J \subseteq \R$ and $n \in \N$, the set $nJ$ is $\{n j \ | \ j \in J\}$.

\begin{namedthm*}{Theorem \ref{thm:mainthm}}
Let $J \subseteq \R$ be a non-empty, open interval. Let $(\psi_n)_{n \in \N}$ be a sequence in $(0,1/10)$ satisfying:
\begin{enumerate}
\item[(A1)] $\displaystyle \sup_{n \in \N} \left(\sum_{\ell =1}^\infty \frac{\cize_{2^\ell n}\ell^2}{\cize_n} \right) < \infty$;
\item[(A2)] the sequence $\displaystyle \left(\cize_n \big/ n\right)_{n \in \N}$ is non-increasing;
\item[(A3)] there exists $c > 1$ such that for all $n \in \N$, $\displaystyle \cize_{n} < c \cize_{2n}$; and
\item[(A4)] $\displaystyle \sum_{p = 2}^\infty \cize_p = \infty$.
\end{enumerate}
Let $\big(\cent_n: \overline{\openint} \to \R \big)_{n\in\N}$ be a sequence of $C^1$ functions satisfying:
\begin{enumerate}
\item[(B1)] $\displaystyle \sup_{n \in \N} \supnormopenint {\cent_n} < \infty$;
\item[(B2)] the sequence $\displaystyle \big(\supnormopenint {\cent_n'}\big)_{n\in\N}$  is eventually non-increasing;
\item[(B3)] $\displaystyle\lim_{N \to \infty} \frac{\sum_{p = 2}^N \max(\cize_p, \supnormopenint {\cent_p'}) (\log p)^2}{\left( \sum_{p = 2}^N \cize_p \right)^2} = 0$; and
\item[(B4)] $\displaystyle \lim_{n \to \infty} \frac{\supnormopenint {\cent_n'}}{ n \cize_n}= 0$.
\end{enumerate}
Let $(\rho_n: \overline{J} \to \R)_{n \in \N}$ be a sequence of $C^1$ functions satisfying: for all proper subintervals $J' \subseteq J$,
\begin{enumerate}
\item[(C1)] the sequence $\displaystyle \left(\left( \rho_n\left( \frac {m - \varphi_n(m/n)}{n} \right) \right)_{m \in n J' \cap \Z} \right)_{n \in \N}$ equidistributes modulo $1$; and
\item[(C2)] $\displaystyle \lim_{n \to \infty} \frac{\cize_n \| \rho_n'\|_{\overline{J'},\infty}}{n} = 0$.
\end{enumerate}
Let $I \subseteq [0,1)$ be a non-empty interval. For Lebesgue-a.e. $\theta \in \openint$, the system
\begin{align}\label{eqn:systemnew}\begin{dcases} \big\| \theta n + \cent_n(\theta) \big\| \leq \cize_n \\ \fracpart {\rho_n(\theta)} \in I \end{dcases}\end{align}
is solvable.
\end{namedthm*}

Some discussion about the long list of assumptions is in order before the proof.  The A conditions place restrictions on the accuracy of the approximation $\psi_n$. Roughly speaking, conditions (A1) and (A2) say that $\psi_n$ decreases sufficiently rapidly, while (A3) and (A4) say that $\psi_n$ does not decrease too rapidly.  It is quick to check that (A2) is a weaker monotonicity assumption than the one that appears in Khintchine's theorem (that $n \psi_n$ is non-increasing).

It is easy to check that when $0 < \sigma < 1$, the sequence $\cize_n = 1/n^\sigma$ satisfies all of the A conditions and (B3) with $\psi_p$ as the maximum.  The main theorem in this special case is reformulated in the introduction.

As was discussed in the introduction, if $\|\varphi_n\|_{\overline{J},\infty} \ll \cize_n$, then the perturbation in the inequality in (\ref{eqn:systemnew}) can be removed without any harm by rescaling $\cize_n$. Thus, while it is not explicitly required, the theorem is most interesting when $\|\varphi_n\|_{\overline{J},\infty}$ exceeds $\cize_n$.

The theorem is such that if $I = [0,1)$, then the second condition in  (\ref{eqn:systemnew}) is automatically satisfied. In this case, there is no need to define the $\rho_n$'s or verify that the C conditions hold.

\begin{proof}[Proof of Theorem \ref{thm:mainthm}]
For brevity, we will suppress the dependence on $J$, $(\psi_n)_{n \in \N}$, $(\cent_n)_{n \in \N}$, $(\rho_n)_{n \in \N}$, and $I$ in the asymptotic notation appearing in the proof.

Write $J = (j_1,j_2)$, and let $\Theta \subseteq \openint$ be the set of those $\theta$ satisfying the conclusion of the theorem. To show that $\Theta$ is of full measure, it suffices by Lemma \ref{lem:densitylemma} to show that there exists a $\de > 0$ such that for all $j_1 < \intone <  \inttwo < j_2$,
\begin{align}\label{eqn:biggerthandelta}\lambda \left(\Theta \cap (\intone,\inttwo)\right) \geq \de (\inttwo - \intone).\end{align}
To this end, fix $j_1 < \intone <  \inttwo < j_2$. In what follows, the phrase ``for all sufficiently large $n$'' means ``for all $n \geq n_0$,'' where $n_0 \in \N$ may depend on any of the quantities and sequences introduced so far, including $\intone$ and $\inttwo$.

For $n \in \N$, define 
\begin{align*}
E_n &= \left\{ \theta \in (\intone,\inttwo) \ \middle | \ \fracdist {\theta n + \cent_n(\theta)} \leq \cize_n \right\},\\
F_n &= \left\{ \theta \in (\intone,\inttwo) \ \middle | \ \fracpart {\rho_n(\theta)} \in I \right\}.
\end{align*}
Put $G_n = E_n \cap F_n$, and note that $\limsup_{n \to\infty}G_n = \Theta \cap (\intone, \inttwo)$. Therefore, in order to show (\ref{eqn:biggerthandelta}), it suffices to prove that there exists a $\de > 0$, independent of $\intone, \inttwo$, for which
\begin{align}\label{eqn:mainboundonlimsup}\lambda \left( \limsup_{\substack{p \to \infty\\ p \text{ prime}}} G_p \right) \geq \de (\inttwo - \intone).\end{align}
Passing to primes here makes parts of the later argument technically easier.

To prove (\ref{eqn:mainboundonlimsup}), it suffices by Lemma \ref{lem:advancedBC} to prove that
\begin{align}\label{eqn:Gpsarebigenough} \sum_{p = 2}^\infty \lambda(G_p) = \infty\end{align}
and that there exists a $\de > 0$ independent of $\intone, \inttwo$ for which
\begin{align}\label{eqn:Gpsareindependent} \limsup_{N \to \infty} \left( \sum_{p = 2}^N \lambda(G_p) \right)^2 \left( \sum_{p,q = 2}^N \lambda(G_p \cap G_q) \right)^{-1} \geq \de (\inttwo - \intone).\end{align}

First we show (\ref{eqn:Gpsarebigenough}) using Lemma \ref{lem:equid}. Fix $0 < \eta < \min \big( \intone-j_1, j_2-\inttwo, (\inttwo-\intone)/3 \big)$. For $n \in \N$, let
\begin{align*}
\begin{split} S_n &= \big\{ m \in \Z \ \big| \ \intone + \eta < m / n < \inttwo-\eta \big\},\\
T_n &= \big\{ m \in \Z \ \big| \ \intone - \eta < m / n < \inttwo+\eta \big\}, \end{split}
\end{align*}
and note that by the bounds on $\eta$, for $n$ sufficiently large,
\begin{align}\label{eqn:boundsonsets} (\inttwo-\intone)n \ll |S_n| < |T_n| \ll (\inttwo-\intone)n. \end{align}
To approximate the set $E_n$ by a union of intervals, define
\begin{align*}
e_{n,m} & = \frac {m-\varphi_n(m \big / n)}{n},\\
E_{n,m} &= e_{n,m} + \frac 12 \left[-\frac{\cize_n}{n}, \frac{\cize_n}{n} \right],\\
E_{n,m}' &= e_{n,m} + 2 \left[-\frac{\cize_n}{n}, \frac{\cize_n}{n} \right].
\end{align*}
It follows from the fact that $\cize_n < 1/10$, the assumptions in (B1) and (B4), the definition of $E_n$, and estimates with the MVT that for $n$ sufficiently large, the $E_{n,m}'$'s are disjoint and
\begin{align}\label{eqn:Enisaunionofintervals}\bigcup_{m \in S_n} E_{n,m} \subseteq E_n \subseteq \bigcup_{m \in T_n} E_{n,m}'.\end{align}
This shows $\lambda(G_n) \leq \lambda(E_n) \ll (\inttwo - \intone)\cize_n$.

Let $I_0 \subseteq I$ be the middle third sub-interval of $I$.

\begin{claim}\label{lem:inInaughtimpliesinI}
For $n$ sufficiently large and $m \in S_n$, if $\fracpart {\rho_n(e_{n,m}) } \in I_0$, then $E_{n,m} \subseteq F_n$.
\end{claim}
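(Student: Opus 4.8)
The plan is to use that $E_{n,m}$ is an extremely short interval of length $\psi_n/n$ centred at $e_{n,m}$, so that $\rho_n$ is very nearly constant on it, together with the fact that $I_0$ is the \emph{middle} third of $I$, which leaves a buffer of width $\lambda(I)/3$ on each side. Concretely, I would show that for large $n$ the oscillation of $\rho_n$ over $E_{n,m}$ is strictly less than $\lambda(I)/3$; since $\{\rho_n(e_{n,m})\}$ is assumed to lie in $I_0$, no point of $E_{n,m}$ can then be carried by $\rho_n$ to a fractional part outside $I$, which is exactly the assertion $E_{n,m}\subseteq F_n$.

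In detail, first I would fix once and for all a proper subinterval $J'\subseteq J$ whose closure contains $[\theta_1,\theta_2]$ — for instance $J'=(\theta_1-\eta,\theta_2+\eta)$, which lies in $J$ by the choice of $\eta$ — so that conditions (C1) and (C2) may be invoked with this single $J'$. Next I would observe that for all sufficiently large $n$ and every $m\in S_n$ we have $E_{n,m}\subseteq(\theta_1,\theta_2)\subseteq\overline{J'}$: indeed $m/n\in(\theta_1+\eta,\theta_2-\eta)$, the shift $|\varphi_n(m/n)|/n$ is $o(1)$ by (B1), and the half-width $\psi_n/(2n)$ is $o(1)$ since $\psi_n<1/10$. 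Then, for $\theta\in E_{n,m}$ one has $|\theta-e_{n,m}|\le\psi_n/(2n)$ with both points in $\overline{J'}$, so the mean value theorem gives $|\rho_n(\theta)-\rho_n(e_{n,m})|\le\|\rho_n'\|_{\overline{J'},\infty}\,\psi_n/(2n)\le\psi_n\|\rho_n'\|_{\overline{J'},\infty}/n$. By (C2) this bound tends to $0$, so for all $n$ past some threshold it is strictly below $\lambda(I)/3$. Finally, because $I_0$ is the middle third of the genuine sub-interval $I\subseteq[0,1)$, every point of $I_0$ lies at distance at least $\lambda(I)/3$ from both endpoints of $I$ and, since $I$ does not wrap around, from $0$ and from $1$ as well; hence $\{\rho_n(e_{n,m})\}\in I_0$ forces $\{\rho_n(\theta)\}\in I$, i.e. $\theta\in F_n$. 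As $\theta\in E_{n,m}$ was arbitrary, this yields $E_{n,m}\subseteq F_n$, and combining with $E_{n,m}\subseteq(\theta_1,\theta_2)$ completes the claim.

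The computation is routine; the only delicate point — and the reason the statement is phrased with $I_0$ rather than $I$ — is the bookkeeping in passing from ``$\rho_n(\theta)$ and $\rho_n(e_{n,m})$ are within $\lambda(I)/3$ of one another on the real line'' to ``both fractional parts lie in $I$''. One needs the middle-third slack precisely so that this small displacement cannot push the fractional part across an endpoint of $I$ or across $0$ or $1$; without it the conclusion would simply be false. Everything else is a direct application of the mean value theorem together with conditions (B1) and (C2).
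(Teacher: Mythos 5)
Your proof is correct and follows the same route as the paper's: bound the oscillation of $\rho_n$ over the short interval $E_{n,m}$ via the mean value theorem, use condition (C2) to make that oscillation fall below $\lambda(I)/3$ for large $n$, and observe that the middle-third slack in $I_0$ then forces $\fracpart{\rho_n(\theta)}\in I$. Your version is in fact slightly more careful than the paper's on two minor points: you explicitly fix a proper subinterval $J'\supseteq[\intone,\inttwo]$ so that (C2) literally applies (the paper writes $\|\rho_n'\|_{\overline{J},\infty}$, which is a small abuse since (C2) is stated for proper subintervals), and you spell out why the $\lambda(I)/3$ buffer prevents the fractional part from crossing $0$ or $1$ as well as the endpoints of $I$, a point the paper leaves implicit.
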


\begin{proof}
Let $\theta \in E_{n,m}$. By the MVT, we see that for some $\xi$ between $\theta$ and $e_{n,m}$,
\[\big|\rho_n(\theta) -  \rho_n(e_{n,m}) \big| \leq \big| \theta - e_{n,m} \big| \big| \rho_n'(\xi) \big| \ll \frac{\cize_n}n  \| \rho_n'\|_{\overline{J},\infty}.\]
By the assumptions in (C2), the right hand side tends to zero as $n \to \infty$, so for $n$ sufficiently large,
\[\big|\rho_n(\theta) -  \rho_n(e_{n,m}) \big| < \frac{\lambda(I)}{3}.\]
Therefore, if $\rho_n(e_{n,m}) \in I_0$, then for all $\theta \in E_{n,m}$, $\rho_n(\theta) \in I$. This implies that $E_{n,m} \subseteq F_n$.
\end{proof}

By the equidistribution assumption in (C2), for $n$ sufficiently large,
\[\frac{\left| \left\{ m \in S_n \ \middle| \ \fracpart {\rho_n(e_{n,m})} \in I_0 \right\} \right|}{|S_n|} \geq \frac{\lambda(I_0)}{2}.\]
Combining this with (\ref{eqn:boundsonsets}) and Claim \ref{lem:inInaughtimpliesinI}, there are $\gg (\inttwo-\intone)n$ integers $m \in S_n$ for which $E_{n,m} \subseteq F_n$. It follows by the disjointness of the intervals $E_{n,m}$ that for $n$ sufficiently large,
\begin{align}\label{eqn:gnisbig}\lambda(G_n) \gg (\inttwo - \intone)n \frac{\cize_n}{n} = (\inttwo - \intone) \cize_n.\end{align}
Now (\ref{eqn:Gpsarebigenough}) follows by the assumption in (A4).


Now we show (\ref{eqn:Gpsareindependent}) by estimating the ``overlaps'' between the $G_p$'s. First we show that it suffices to prove that for all sufficiently large primes $p$ (potentially depending on $\intone$, $\inttwo$) and for all $N > p$,
\begin{align}\label{eqn:sufficientupperboundcondition}
\sum_{q > p}^N \lambda (E_p \cap E_q) \ll (\inttwo-\intone) \sum_{q > p}^N  \cize_p \cize_q  + \max \left(\cize_p, \supnormopenint {\cent_p'} \right) \left( \log p \right)^2.
\end{align}
Indeed, suppose (\ref{eqn:sufficientupperboundcondition}) holds for all primes $p$ greater than some sufficiently large $p_0 \in \N$. Using the trivial bound $\lambda(G_p \cap G_q) \leq \lambda(E_p \cap E_q)$, it follows that
\begin{align*}
\sum_{p, q = 2}^N \lambda(G_p &\cap G_q) \leq 2\left( \sum_{\substack{p \geq p_0 \\ q > p}}^N \lambda(G_p \cap G_q) + \sum_{\substack{p < p_0 \\ q > p}}^N \lambda(G_q) \right) + \sum_{q = 2}^N \lambda(G_q) \\
&\ll (\inttwo-\intone) \sum_{\substack{p \geq p_0 \\ q > p}}^N \cize_p \cize_q  + \sum_{p\geq p_0}^N \max \left(\cize_p, \supnormopenint {\cent_p'} \right) (\log p)^2 + \sum_{\substack{p < p_0 \\ q \geq p}}^N \lambda(G_q)\\
&\ll \frac 1{\inttwo-\intone} \sum_{p, q = 2}^N \lambda(G_p) \lambda(G_q)  + p_0 \sum_{p = 2}^N \max \left(\cize_p, \supnormopenint {\cent_p'} \right) (\log p)^2,
\end{align*}
where the last line follows from (\ref{eqn:gnisbig}) and the fact that $\lambda (G_q) \ll \cize_q$. The inequality in (\ref{eqn:Gpsareindependent}) follows from this estimate because (\ref{eqn:gnisbig}) and (B3) imply that
\[\lim_{N \to \infty} \frac{p_0\sum_{p = 2}^N \max \left(\cize_p, \supnormopenint {\cent_p'} \right) (\log p)^2}{\left( \sum_{p = 2}^N \lambda(G_p)\right)^2} = 0.\]

To show (\ref{eqn:sufficientupperboundcondition}), note that by (\ref{eqn:Enisaunionofintervals}), the set $E_p$ is covered by a union of intervals $E_{p,r}'$, each of length $4 \cize_p \big / p$. If $p < q$ and $E_{p,r}' \cap E_{q,s}' \neq \emptyset$, then by estimating the distance between the midpoints of the intervals and using that $\cize_n/n$ is non-increasing (A2),
\begin{align}
\label{eqn:overlapinequality}\left| q \left( r - \cent_p (r /p)\right) - p \left( s - \cent_q (s/q)\right) \right| &\leq pq \ 4\max \left( \frac{\cize_p}p, \frac{\cize_q}q \right) = 4 q \cize_p,\\
\notag \lambda \left( E_{p,r}' \cap E_{q,s}' \right) &\leq 4 \min \left( \frac {\cize_p}{p}, \frac {\cize_q}{q} \right) = 4 \frac {\cize_q}{q}.
\end{align}

The left hand side of (\ref{eqn:sufficientupperboundcondition}) is then
\begin{align*}
\sum_{q > p}^N \lambda(E_p \cap E_q) \leq \sum_{q > p}^N \sum_{\substack{r \in T_p \\ s \in T_q}} \lambda \left( E_{p,r}' \cap E_{q,s}' \right) \ll \sum_{q > p}^N \frac {\cize_q}{q} \sum_{\substack{r \in T_p, \ s \in T_q \\ \text{(\ref{eqn:overlapinequality}) holds}}} 1.
\end{align*}
Now (\ref{eqn:sufficientupperboundcondition}) will follow from Lemma \ref{lem:hardersolutionstoinequality} by partitioning $\{p, \ldots, N\}$ dyadically. Indeed, using (A2), the right hand side of the previous expression is
\begin{align}\label{eqn:targetinequality}\sum_{\ell = 0}^\infty \sum_{\substack{2^\ell p < q < 2^{\ell+1} p \\ q \leq N}} \frac {\cize_q}{q} \sum_{\substack{r \in T_p, \ s \in T_q \\ (q,r,s) \text{ satisfies (\ref{eqn:overlapinequality})}}} 1 \leq \sum_{\ell = 0}^\infty \frac {\cize_{2^\ell p}}{2^\ell p} \sum_{\substack{q \in \bigQ_\ell, \ r \in T_p, \ s \in T_q \\ (q,r,s) \text{ satisfies (\ref{eqn:overlapinequalitytwo})}}} 1,\end{align}
where $\bigQ_\ell$ are those primes $q$ for which $2^\ell p < q < \min \left(2^{\ell+1} p, N+1 \right)$, and
\begin{align}
\label{eqn:overlapinequalitytwo}\big| q \left( r - \cent_p (r /p)\right) - p \left( s - \cent_q (s/q)\right) \big| &\leq 4\cdot 2^{\ell+1} p \cize_p.
\end{align}
For each $\ell$, we apply Lemma \ref{lem:hardersolutionstoinequality} with $p$ as it is, $2^\ell p$ as $Q$, $2^{\ell+3}p \cize_p$ as $L$, $\bigQ_\ell$ as $\bigQ$, $(\intone - \eta, \inttwo + \eta)$ as $\openint$, $\varphi_n$ restricted to $[\intone - \eta, \inttwo + \eta]$ as $\varphi_n$, and
\[\omega_p := \frac{1}{10} \min\left( \inttwo-\intone+2\eta, \frac{2^3 \cize_p}{\|\cent_p'\|_{[\intone - \eta, \inttwo + \eta],\infty}} \right) \text{ as } \omega.\]
The conditions for the lemma are met for $p$ sufficiently large by the assumptions in (B1), (B2), and (B4), and the conclusion is that the right-most summand in (\ref{eqn:targetinequality}) is
\begin{align}\label{eqn:itermediatestepininequality}\ll (\inttwo-\intone  +2\eta) \left( 2^{\ell+3}p \cize_p \sum_{\substack{2^\ell p < q < 2^{\ell+1} p \\ q \leq N}} 1 + \omega_p^{-1} 2^\ell p (\log (2^\ell p))^2 \right).\end{align}

To bound this further, we derive two inequalities from the assumptions in (A1) and (A3). It follows from (A3) that when $2^\ell p < q < 2^{\ell + 1} p$,
\[\cize_{2^\ell p} < c \cize_{2^{\ell + 1}p} < c 2^{\ell + 1}p \cize_q / q < 2 c \cize_q.\]
Additionally, it follows from (A1) that
\[\sum_{\ell = 0}^\infty \cize_{2^\ell p} \left( \log \left( 2^\ell p \right) \right)^2 \ll \cize_p \left(\log p \right)^2,\]
where the implied constant in the asymptotic notation is independent of $p$. With these inequalities and (\ref{eqn:itermediatestepininequality}), the right hand side of (\ref{eqn:targetinequality}) is bounded from above by
\begin{align*}
&\ll (\inttwo-\intone) \left( \cize_p \sum_{\ell = 0}^\infty \sum_{\substack{2^\ell p < q < 2^{\ell+1} p \\ q \leq N}} \cize_{2^\ell p} +  \omega_p^{-1} \sum_{\ell = 0}^\infty \cize_{2^\ell p} \left( \log \left( 2^\ell p \right) \right)^2 \right)\\
&\ll 2c (\inttwo-\intone) \cize_p \sum_{\ell = 0}^\infty \sum_{\substack{2^\ell p < q < 2^{\ell+1} p \\ q \leq N}} \cize_q + \cize_p \omega_p^{-1} \left( \log  p  \right)^2 \\
&\ll (\inttwo-\intone) \sum_{q > p}^N  \cize_p \cize_q  + \max \left(\cize_p, \supnormopenint {\cent_p'} \right) \left( \log p \right)^2.
\end{align*}
This shows (\ref{eqn:sufficientupperboundcondition}), completing the proof of the theorem.
\end{proof}

\section{Solutions to linear equations in P-S sequences}\label{sec:reducttodiophantineapprox}

In this section, we prove Theorem \ref{thm:mainapplication} and Corollary \ref{cor:maincortwo} with the help of Theorem \ref{thm:mainthm}. The first step in the proof of Theorem \ref{thm:mainapplication} is a reduction of the problem to one in Diophantine approximation. This setting arises naturally when solving the equation $a \intpart {n^\al} + b = \intpart{m^\al}$ for $n$.

\begin{theorem}\label{thm:firstDAform}
Let $0 < a < 1$, $I \subseteq [0,1)$ be a non-empty interval, and $\ka, c, \ga \in \R$ with $c > 0$ and $\gamma \neq 0$. For Lebesgue-a.e. $\al > 1$, the system
\begin{align}\label{eqn:systemone}\begin{dcases} \fracdist {na^{1/\al} + \frac{\ka a^{1/\al}}{\al n^{\al - 1}} } \leq \frac{c}{n^{\al-1}} \\ \fracpart {\ga n^\al} \in I \end{dcases}\end{align}
is solvable or unsolvable in $\N$ according as $\al < 2$ or $\al > 2$.
\end{theorem}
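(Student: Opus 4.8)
The plan is to realize the system (\ref{eqn:systemone}) as an instance of the system (\ref{eqn:systemnew}) from Theorem \ref{thm:mainthm}, after the standard change of variables that turns a metric statement ``for a.e.\ $\al > 1$'' into a metric statement ``for a.e.\ $\theta$ in an interval.'' Concretely, for a fixed compact subinterval $[\al_1, \al_2] \subseteq (1, \infty)$ I would introduce the substitution $\theta = a^{1/\al}$ (or some convenient smooth reparametrization thereof), which is a $C^\infty$ diffeomorphism from $[\al_1,\al_2]$ onto an interval $J \subseteq (0,1)$ with nonvanishing derivative; since such a diffeomorphism maps null sets to null sets and full-measure sets to full-measure sets, it suffices to prove the transformed statement for a.e.\ $\theta \in J$. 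Writing $\al = \al(\theta)$, the first line of (\ref{eqn:systemone}) becomes $\| n\theta + \cent_n(\theta)\| \le \cize_n$ with $\cize_n = c/n^{\al-1}$ and $\cent_n(\theta) = \ka a^{1/\al}/(\al n^{\al-1})$, and the second line becomes $\{\rho_n(\theta)\} \in I$ with $\rho_n(\theta) = \ga n^\al$. The subtlety is that here $\cize_n$, $\cent_n$, and $\rho_n$ themselves depend on $\theta$ through the exponent $\al = \al(\theta)$; the way around this is to split $J$ into finitely many (or countably many, shrinking) subintervals on which $\al$ ranges over a short window $[\sigma_0, \sigma_0 + \epsilon']$ where $\sigma := \al - 1 \in (0,1)$, prove the result on each, and take a union. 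On such a window one can dominate the $\theta$-dependent quantities by $\theta$-free comparison sequences of the form $n^{-\sigma}$ and verify the hypotheses uniformly.

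The core of the argument is then the verification of conditions (A1)--(A4), (B1)--(B4), (C1)--(C2) for these choices. For (A1)--(A4): with $\cize_n \asymp n^{-\sigma}$, $0 < \sigma < 1$, all four hold exactly as noted in the paper's remark after the statement of Theorem \ref{thm:mainthm} (in particular (A4), $\sum_p p^{-\sigma} = \infty$, uses $\sigma < 1$, and this is precisely where the dichotomy $\al < 2$ vs.\ $\al > 2$ — i.e.\ $\sigma < 1$ vs.\ $\sigma > 1$ — enters). For the B conditions, note $\cent_n(\theta) = \ka a^{1/\al}/(\al n^{\al - 1}) \asymp n^{-\sigma}$, so $\supnormopenint{\cent_n}$ is bounded (B1) and in fact $\to 0$; differentiating in $\theta$ one finds $\supnormopenint{\cent_n'} \ll (\log n)/n^{\sigma}$ or so (the $\log n$ coming from $\partial_\theta (n^{-\sigma(\theta)}) = -n^{-\sigma}\sigma'(\theta)\log n$), which is eventually non-increasing (B2), satisfies $\supnormopenint{\cent_n'}/(n\cize_n) \ll (\log n)/n \to 0$ (B4), and makes the numerator in (B3) $\ll \sum_{p \le N} p^{-\sigma}(\log p)^3$, which is $o\big((\sum_{p\le N} p^{-\sigma})^2\big)$ since $\sigma < 1$ by a routine partial-summation/Chebyshev estimate (B3). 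For (C2): $\rho_n(\theta) = \ga n^\al$ has $\rho_n'(\theta) = \ga n^\al \log n \cdot \al'(\theta) \ll n^{\al}\log n$, so $\cize_n\|\rho_n'\|_\infty/n \ll n^{-\sigma}n^{1+\sigma}\log n / n = \log n$ — this does \emph{not} obviously go to zero, so here I would instead work, as the paper does in (C1), with $\rho_n$ evaluated at the shifted point $e_{n,m} = (m - \cent_n(m/n))/n$ and exploit that on the relevant scale $n\cdot(\cize_n/n) = \cize_n = n^{-\sigma}$ is small; more carefully, one rescales or uses that $\al$ is nearly constant so $n^{\al}$ is effectively a fixed power, making the discrepancy of $\{\ga n^\al\}$ controllable. (C1), the equidistribution of $\big(\ga n^{\al(\theta)} \text{ evaluated along the shifted lattice}\big)_m$, I would handle via classical results on the equidistribution of $(\ga n^\al)$ and of polynomial-like sequences, together with a Weyl-sum / van der Corput estimate showing that the perturbation by $\cent_n$ does not destroy equidistribution — this is essentially the content needed, and is where I would lean on the hypothesis $\ga \neq 0$.

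The main obstacle I anticipate is precisely the $\theta$-dependence of the exponent: because $\cize_n = c/n^{\al-1}$ with $\al = \al(\theta)$, the quantities in Theorem \ref{thm:mainthm} are genuinely functions of $\theta$ and not honest scalar sequences, so one cannot apply the theorem verbatim. The resolution — localizing to windows where $\sigma = \al - 1$ varies by less than any prescribed $\epsilon' > 0$, bounding everything above and below by $\theta$-free powers $n^{-\sigma_0 \mp \epsilon'}$, applying Theorem \ref{thm:mainthm} on each window (the error $\epsilon'$ is absorbed into the $\epsilon$ appearing in hypothesis (3)/(B3)-type conditions), and taking a countable union over a sequence of windows exhausting $(1,2)$ resp.\ $(2,\infty)$ — is routine but needs to be carried out with care so that the ``a.e.''\ quantifier survives the union and the borderline $\al = 2$ is excluded (it has measure zero anyway). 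The unsolvability half for $\al > 2$ is separate and easier: when $\sigma = \al - 1 > 1$, one has $\sum_n \cize_n < \infty$, so a direct Borel--Cantelli argument (convergence part) on the sets $E_n$ shows that for a.e.\ $\theta$ the first inequality in (\ref{eqn:systemone}) has only finitely many solutions, hence the system is unsolvable; I would dispatch this first, before the longer solvability argument.
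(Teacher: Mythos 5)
Your overall strategy is the same one the paper follows: change variables by $\theta = a^{1/\alpha}$ (the paper's $\varchangeempty a$), cover the interval corresponding to $1 < \alpha < 2$ by short subintervals $J = (j_1,j_2)$, replace the $\theta$-dependent accuracy $c/n^{\alpha(\theta)-1}$ by a worst-case $\theta$-free $\psi_n$ on each short interval (the paper takes $\psi_n = \min(c,10^{-1}) / n^{t_a(j_2)-1}$), verify the hypotheses of Theorem \ref{thm:mainthm}, and dispatch $\alpha > 2$ by a first-moment Borel--Cantelli argument. That matches.

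But there is a genuine gap at (C2). You correctly compute $\rho_n'(\theta) \ll n^{\alpha(\theta)}\log n$ and then observe that $\psi_n\|\rho_n'\|_\infty/n \ll (\log n)$, which does not tend to $0$, and at that point you resort to vague talk of ``working with $\rho_n$ at the shifted point'' or ``rescaling because $\alpha$ is nearly constant.'' Neither of these is the issue, and neither resolves it. The resolution is already baked into the statement of (C2): it is required only for \emph{proper} subintervals $J' = (j_1',j_2') \subsetneq J$. Over $\overline{J'}$ one has $\|\rho_n'\|_{\overline{J'},\infty} \ll n^{t_a(j_2')}\log n$ with $t_a(j_2') < t_a(j_2)$ strictly, so
\[
\frac{\psi_n\,\|\rho_n'\|_{\overline{J'},\infty}}{n} \ll n^{-(t_a(j_2)-1)}\cdot n^{t_a(j_2')}\cdot\frac{\log n}{n} = n^{\,t_a(j_2')-t_a(j_2)}\log n \longrightarrow 0,
\]
precisely because the worst-case exponent on the smaller interval is strictly smaller. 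Your computation used the supremum over all of $J$, which is why the exponent cancelled and you were left with $\log n$. Without noticing the ``proper subinterval'' clause, (C2) genuinely fails and your argument stalls there.

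Two smaller imprecisions, not fatal but worth flagging. First, in (B3) the exponent in $\|\cent_p'\|_{\overline{J},\infty}$ is $t_a(j_1)-1$ (the supremum is attained at the \emph{left} endpoint of $J$), while the exponent in $\psi_p$ is $t_a(j_2)-1$, and these differ; the computation closes only under the constraint $t_a(j_2)-t_a(j_1) < 2 - t_a(j_2)$, which is exactly the paper's condition (\ref{eqn:intproperty}) on how short the covering intervals must be. You gesture at ``absorbing $\epsilon'$ into the (B3)-type conditions,'' which is the right idea, but you should state the actual inequality that must hold. Second, bounding $\|\cent_n'\|_{\overline{J},\infty}$ by the non-increasing majorant $(\log n)/n^{\sigma}$ does not by itself establish (B2), which asks that the sequence $\|\cent_n'\|_{\overline{J},\infty}$ \emph{itself} be eventually non-increasing; the paper instead computes $\cent_n'(\theta)$ explicitly, observes the supremum is attained at $j_1$ for $n$ large, and checks monotonicity of $|\cent_n'(j_1)|$ directly.
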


\begin{proof}[Proof of Theorem \ref{thm:mainapplication} assuming Theorem \ref{thm:firstDAform}]
Since $a \neq 1$, by interchanging $x$ and $y$ if necessary, we may assume that $|a| < 1$. By assumption, $a \neq 0$ and (\ref{eqn:main}) is solvable in $\N$, and this implies that
\begin{align*}a, b \in \Q, \ a = \frac{a_1}{a_2} > 0, \ a_1, a_2 \in \N, \ (a_1, a_2) = 1, \ \text{and } a_2b \in \Z.\end{align*}
Let $d \in \{0,\ldots,a_2 - 1\}$ be such that $d a_1 \equiv -b a_2 \pmod{a_2}$, and note that for all $r \in \R$,
\[a \intpart {r} + b\in \Z \iff \intpart{r} \equiv d \pmod {a_2} \iff \fracpart {\frac {r}{a_2}} \in \left[ \frac{d}{a_2}, \frac{d+1}{a_2} \right).\]
It follows that
\begin{align}
\notag a \intpart {n^\al} + b\in \psa \iff & \exists \ k \in \N, \ a \intpart {n^\al} + b = \intpart{k^\al}\\
\notag \iff & \begin{cases} a \intpart {n^\al} + b \in \Z \ \text{ and } \\ \exists \ k \in \N, \ a \intpart {n^\al} + b \leq k^\al < a \intpart {n^\al} + b + 1 \end{cases} \\
\label{eqn:suffandnecconditions}\iff & \fracpart {\frac {n^\al}{a_2}} \in \left[ \frac{d}{a_2}, \frac{d+1}{a_2} \right) \ \text{ and } J_n \cap \N \neq \emptyset,
\end{align}
where, by writing $\intpart {n^\al} = n^\al - \fracpart{n^\al}$ and applying the Mean Value Theorem (MVT) twice,
\begin{align*}J_n &= \left[ \left(a \intpart {n^\al} + b\right)^{1/ \al},\left(a \intpart {n^\al} + b + 1\right)^{1/ \al} \right) = a^{1/ \al}n + U_n + [L_n,R_n),\\
U_n &= \frac b{\al} u_n^{-1+1/\al}, \ \ u_n \text{ between } an^\al \text{ and } an^\al + b,\\
L_n &= -\frac a\al \fracpart {n^\al} l_n^{-1 + 1/\al}, \ \ l_n \text{ between } a \intpart{n^\al} +b \text{ and } a n^\al + b,\\
R_n &= \frac a\al \left( \frac {1}a - \fracpart {n^\al} \right) r_n^{-1 + 1/\al}, \ \ r_n \text{ between } a n^\al + b \text{ and } a \intpart {n^\al} + b+1.\end{align*}
Note that $J_n, U_n, u_n, L_n, l_n, R_n, $ and $r_n$ all depend on $\al$. This shows so far that (\ref{eqn:main}) is solvable in $\psa$ if and only if the system in (\ref{eqn:suffandnecconditions}) is solvable in $\N$.

We proceed by showing that solutions to (\ref{eqn:systemone}) yield solutions to (\ref{eqn:suffandnecconditions}) and vice versa when $I$, $\ka$, $c$, and $\ga$ are chosen appropriately. To this end, for $i=1,2$, let
\begin{align*}A &= \{\al > 1 \ | \ (\ref{eqn:suffandnecconditions}) \text{ is solvable in } \N \},\\
B_i &= \{\al > 1 \ | \ (\ref{eqn:systemone}) \text{ is solvable in } \N \text{ for } a, I_i, \ka_i, c_i, \ga_i \}.\end{align*}
To prove Theorem \ref{thm:mainapplication}, it suffices by Theorem \ref{thm:firstDAform} to find $I_i, \ka_i, c_i, \ga_i$, $i=1,2$, for which
\begin{align}\label{eqn:containmenttoprove}B_1 \cap (1,2) \subseteq A \subseteq B_2.\end{align}

We begin with the first containment in (\ref{eqn:containmenttoprove}). Let $I' = [1 \big / 3, 2 \big / 3]$, $I_1 = d/a_2 + I'/a_2$, $\ga_1 = 1/a_2$, $\ka_1 = b \big/ a$, and $c_1$ be a constant depending only on $a$ to be specified momentarily.

Suppose $\al \in B_1 \cap (1,2)$ and that $n$ is a solution to (\ref{eqn:systemone}); we will show that if $n$ is sufficiently large, then it solves the system in (\ref{eqn:suffandnecconditions}). By (\ref{eqn:systemone}),
\[\fracpart {\frac {n^\al}{a_2}} = \fracpart {\ga_1 n^\al} \in I_1 \subseteq \left[ \frac d{a_2}, \frac {d+1}{a_2} \right),\]
which is the first condition in (\ref{eqn:suffandnecconditions}). This also implies that $\fracpart {n^\al} \in I'$, which when combined with the fact that $1 \big / a > 1$, means
\[\fracpart {n^\al} > \frac 13 \quad \text{ and } \quad \frac 1a - \fracpart {n^\al} > \frac 13.\]
Combining these inequalities with the fact that $\al \in (1,2)$ and, for $n$ sufficiently large, $a n^\al + b + 1 \leq 2 a n^\al$, we get
\begin{align*}
\begin{split}
-L_n &= \frac a\al \fracpart {n^\al} l_n^{-1 + 1/\al} \gg_{a} \frac 1{n^{\al-1}},\\
R_n &= \frac a\al \left( \frac {1}a - \fracpart {n^\al} \right) r_n^{-1 + 1/\al} \gg_{a} \frac 1{n^{\al-1}}.
\end{split}
\end{align*}
Let $c_1$ be a third of the minimum of the constants implicit in the previous two expressions. With this choice, $J_n$ contains an open interval centered at $a^{1/\al}n + U_n$ of length $6 c_1 \big / n^{\al - 1}$. By the triangle inequality and an application of the MVT, for $n$ sufficiently large,
\begin{align}\label{eqn:triangleandmvt}\left| \fracdist {a^{1/\al}n + U_n} - \fracdist {a^{1/\al}n + \frac{\ka_1 a^{1/\al}} {\al  n^{\al-1}}} \right| \leq \left| U_n -\frac{\ka_1 a^{1/\al}} {\al  n^{\al-1}} \right| \leq \frac{c_1}{n^{\al-1}},\end{align}
from which it follows by (\ref{eqn:systemone}) that $\fracdist {a^{1/\al}n + U_n} \leq 2c_1 \big/ n^{\al-1}$. This shows that $J_n$ contains the nearest integer to $a^{1/\al}n + U_n$; in particular, $J_n \cap \N \neq \emptyset$, so $n$ solves (\ref{eqn:suffandnecconditions}).

The second containment in (\ref{eqn:containmenttoprove}) is handled similarly. Let $I_2 = [0,1)$, $\ga_2 = 1$, $\ka_2 = b \big / a$, and $c_2$ be a constant depending only on $a$ to be specified momentarily. Suppose that $\al \in A$ and $n$ solves (\ref{eqn:suffandnecconditions}); we will show that $n$ satisfies (\ref{eqn:systemone}).  The second condition in (\ref{eqn:systemone}) is satisfied automatically by our choice of $I_2$. For $n$ sufficiently large, $a \intpart {n^\al} + b \geq a n^\al \big / 2$, whereby $|L_n|, |R_n| \leq c_2 \big/ 2 {n^{\al-1}}$, where $c_2$ is chosen (depending only on $a$) to satisfy both inequalities. Since $J_n$ contains an integer, it must be that $\fracdist{ a^{1/\al}n + U_n} \leq c_2 \big/ 2 n^{\al-1}$. It follows by the triangle inequality and the MVT just as in (\ref{eqn:triangleandmvt}) with an upper bound of  $c_2 \big/ 2 n^{\al-1}$ that $n$ satisfies the first condition in (\ref{eqn:systemone}).
\end{proof}

To prove Theorem \ref{thm:firstDAform}, we first change variables under $\varchange ax = (\log_ax)^{-1}$ to arrive at the equivalent Theorem \ref{thm:firstDAform}$'$.  Note that when $0 < a < 1$, the function $\varchangeempty a$ is increasing and infinitely differentiable on $(a,1)$. Proof of the equivalence of these two theorems is a routine exercise using the fact that $\varchangeempty a$ on $(a,1)$ is non-singular with respect to the Lebesgue measure.

\begin{namedthm*}{Theorem \ref{thm:firstDAform}$'$}
Let $0 < a < 1$, $I \subseteq [0,1)$ be a non-empty interval, and $\ka, c, \ga \in \R$ with $c > 0$ and $\gamma \neq 0$. For Lebesgue-a.e. $a < \theta < 1$, the system 
\begin{align}\label{eqn:systemtwo}\begin{dcases} \fracdist {n\theta + \frac{\ka \theta}{\varchange a \theta n^{\varchange a\theta - 1}}} \leq \frac{c}{n^{\varchange a\theta -1}} \\ \fracpart {\ga n^{\varchange a\theta }} \in I \end{dcases}\end{align}
is solvable or unsolvable in $\N$ according as $\theta < \sqrt a$ or $\theta > \sqrt a$.
\end{namedthm*}

This change of variables reveals the form of Theorem \ref{thm:firstDAform} as a perturbation of the rotation considered in Khintchine's theorem with a twist.  We cannot directly apply Theorem \ref{thm:mainthm} because, as it stands now, the accuracy of the approximation $\psi_n$ in (\ref{eqn:systemtwo}) is a function of the variable $\theta$. We are able to eliminate this unpleasant feature by breaking the interval $(a,\sqrt{a})$ up into suitable subintervals and replacing the upper bound with a worst case bound on that subinterval.

The following equidistribution lemma will help us verify condition (C1) when applying Theorem \ref{thm:mainthm} to prove Theorem \ref{thm:firstDAform}$'$.

\begin{lemma}\label{lem:equid}
Let $0 < a < 1$, $\gamma \in \R \setminus \{0\}$, and $\openint = (j_1,j_2)$ be a non-empty, open interval with closure $\overline{\openint} \subseteq (a,\sqrt{a})$. Let $\big(\cent_n: \overline{\openint} \to \R \big)_{n \in \N}$ be a sequence of $C^3$ functions such that for all $i \in \{0,1,2,3\}$, $\sup_{n\in\N}\supnormopenint {\cent_n^{(i)}} < \infty$. The sequence
\[\left( \left( \gamma n^{\varchange a{m/n - \varphi_n(m/n)/n}} \right)_{m \in nJ \cap \Z} \right)_{n \in \N}\]
equidistributes modulo $1$.
\end{lemma}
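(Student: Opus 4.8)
The plan is to reduce the claim to a classical equidistribution statement for polynomial-like sequences by showing that, after the change of variables $m \mapsto m/n$, the exponent $t_a(m/n - \varphi_n(m/n)/n)$ is close enough to a fixed real number that the sequence $(\gamma n^{\text{exponent}})_m$ behaves like $(\gamma n^{t_a(x)})_m$ for $x$ ranging over a short interval, and then to use the smoothness hypotheses on $\varphi_n$ to control the discrepancy uniformly in $n$. Concretely, I would fix a large $n$, set $x_{n,m} = m/n$ for $m \in nJ \cap \Z$, and write $\beta_{n,m} = t_a\!\left(x_{n,m} - \varphi_n(x_{n,m})/n\right)$. Since $\overline{J} \subseteq (a,\sqrt a)$ is compact and $t_a$ is $C^\infty$ and non-singular there, and since $\sup_n \|\varphi_n\|_{\overline J,\infty} < \infty$ forces $\varphi_n(x_{n,m})/n \to 0$ uniformly, a Taylor expansion of $t_a$ gives $\beta_{n,m} = t_a(x_{n,m}) + O(1/n)$ with the implied constant uniform in $m$. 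The point of assuming three derivatives is that we will need to differentiate $\beta_{n,m}$ as a function of $m$ a couple of times to apply a van der Corput / Weyl-type exponential sum estimate, so I would retain the expansion to the relevant order rather than just the zeroth term.

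Next I would estimate the Weyl sums $\frac{1}{|nJ \cap \Z|}\sum_{m \in nJ\cap\Z} e\!\left(h \gamma n^{\beta_{n,m}}\right)$ for each nonzero integer $h$ and show they tend to $0$ as $n \to \infty$; by Weyl's criterion this gives the claimed equidistribution, and the reduction from arbitrary intervals $I$ to Weyl sums is standard. To estimate the sum, regard $g_n(x) = \gamma n^{t_a(x - \varphi_n(x)/n)}$ as a smooth function of the real variable $x$ on $\overline J$ and compute $g_n'(x) = \gamma n^{t_a(\cdot)} (\log n)\, t_a'(\cdot)\,\bigl(1 - \varphi_n'(x)/n\bigr)$. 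Since $t_a' $ is bounded away from $0$ on $\overline J$ and $\varphi_n'/n \to 0$ uniformly, $g_n'(x)$ has constant sign and magnitude $\asymp n^{t_a(x)} \log n$, which on $\overline J \subseteq (a,\sqrt a)$ is of order between roughly $n^{1/2}\log n$ (when $x$ near $\sqrt a$, $t_a(x)=\log_a x \to 1/2$... wait, $t_a(\sqrt a) = 1/\log_a\sqrt a = 2$) — in any case $t_a$ takes values in an interval bounded away from $0$ and $1$ on $\overline J$, so $g_n'$ is large and the second derivative $g_n''$ is comparably controlled. Applying the second-derivative (van der Corput) estimate for exponential sums, $\bigl|\sum_{m} e(h g_n(m/n))\bigr| \ll$ (length of sum)$\cdot(\text{appropriate negative power of the derivative size}) + (\text{lower order})$, and dividing by $|nJ\cap\Z| \asymp n\,\lambda(J)$, yields a bound tending to $0$. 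Here the hypothesis that $t_a(\overline J) \subseteq (1,2)$ (equivalently $\overline J \subseteq (a,\sqrt a)$, so the relevant Piatetski-Shapiro exponent is between $1$ and $2$) is exactly what makes the van der Corput exponents work out — this is the same phenomenon behind the $\al < 2$ dichotomy in Theorem \ref{thm:mainapplication}.

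The main obstacle I anticipate is bookkeeping the derivatives of the composite exponent $t_a(x - \varphi_n(x)/n)$ and verifying that the perturbation term and its first two derivatives contribute only lower-order corrections uniformly in $n$; this is where all three derivative bounds on $\varphi_n$ get used, and one must be careful that differentiating brings down factors of $\log n$ but never factors of $n$ from the perturbation (the $1/n$ prefactor on $\varphi_n$ is what saves this). A secondary technical point is that $g_n$ is a function of the continuous variable $x$ while the sum runs over the arithmetic progression $x = m/n$; the standard exponential-sum lemmas are stated for sums over integers, so I would rescale, setting $f_n(m) = h\gamma n^{t_a(m/n - \varphi_n(m/n)/n)}$ as a function of the integer $m$ over the interval $nJ$, and check that $f_n''(m) \asymp (\log n)\, n^{t_a(m/n)-2}$ lies in a range $[\lambda, A\lambda]$ with $\lambda \asymp n^{-1}\log n$ (up to the variation of $t_a$ over the short interval $J$, which is harmless), so that van der Corput's inequality gives a saving of a power of $n$ over the trivial bound $|nJ\cap\Z|$. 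Once the Weyl sums are shown to vanish in the limit, equidistribution modulo $1$ follows immediately.
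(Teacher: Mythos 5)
Your overall strategy—Weyl's criterion plus a van der Corput exponential-sum estimate applied to $g_n(x)=\gamma n^{t_a(x-\varphi_n(x)/n)}$—is the right framework, and the bookkeeping you flag (the $1/n$ prefactor on $\varphi_n$ keeping the perturbation derivatives lower-order) is indeed the core bookkeeping in the paper's proof. But there is a genuine gap in the step where you apply the second-derivative estimate directly to $g_n$ (equivalently to $G_n(m):=g_n(m/n)$ as a function of the integer $m$). You claim $G_n''(m)$ lies in a range $[\lambda, A\lambda]$ with $A$ effectively constant, dismissing the variation of $t_a$ over $J$ as ``harmless.'' That is not true for the lemma as stated: $J$ is an arbitrary open interval with $\overline{J}\subseteq(a,\sqrt a)$, so $t_a$ ranges over a fixed subinterval $[\sigma_1,\sigma_2]\subset(1,2)$ which need not be short. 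Consequently $|G_n''(m)|\asymp n^{t_a(m/n)-2}(\log n)^2$ varies over $[n^{\sigma_1-2},n^{\sigma_2-2}]$ up to logarithms, so the ratio $A$ is $\asymp n^{\sigma_2-\sigma_1}$, which is unbounded. Plugging into the standard second-derivative bound (whether in the form $AN\lambda^{1/2}+\lambda^{-1/2}$ or the Kuipers--Niederreiter form using $|G_n'(b)-G_n'(a)|$ and $\rho^{-1/2}$) gives, after dividing by $N_n\asymp n$, a quantity of order $n^{\sigma_2-\sigma_1/2-1}$ up to logs, which does \emph{not} tend to zero once $\sigma_2-\sigma_1/2>1$ (e.g. $\sigma_1=1.1$, $\sigma_2=1.9$). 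So the direct estimate, as written, fails on a general $J$.

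There are two ways to close this. One is to partition $J$ into $\asymp\log n$ subintervals on each of which $t_a$ varies by $O(1/\log n)$, apply van der Corput on each piece with now-bounded $A$, and sum; this works but you do not discuss it, and it is not quite ``harmless.'' The other—this is what the paper does—is to first apply the van der Corput Difference Theorem, reducing the problem to bounding $\frac{1}{N_n-h}\sum_i e\big(b\,g_{n,h}(i)\big)$ for each fixed $h$, where $g_{n,h}(x)=g_n(x+h)-g_n(x)$, and only then apply the second-derivative estimate to $g_{n,h}$. Differencing knocks the exponent of $n$ in $|g_{n,h}''|$ down to $\asymp n^{t_a-3}$ (via $g_{n,h}''(x)=h\,g_n'''(\xi_x)$ and the extra $n^{-1}$ from $f_n'$), after which the same calculation yields a bound $\asymp n^{(3-\sigma_1)/2}/(n\sqrt{|b|h})\to 0$ \emph{uniformly} over the admissible range $\sigma_1,\sigma_2\in(1,2)$, with no subdivision required. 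This also explains the $C^3$ hypothesis you found puzzling: the paper needs a third derivative of $g_n$ because it bounds $g_{n,h}''$ through $g_n'''$, whereas your direct approach would only need $C^2$ (a further sign the two arguments differ). In short: right tools, but you need either a subdivision argument you have not supplied, or the extra van der Corput differencing step.
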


\begin{proof}
Let $N_n = \big| \{ m \in \Z \ | \ m/n \in \openint \} \big|$, and note that $N_n \big/ \big( n\lambda(\openint)\big) \to 1$ as $n \to \infty$. For $n, h \in \N$, let
\begin{gather*}
f_n(x) = \frac 1n \left( x+\bothfloor {j_1 n} - \cent_n \left( \frac {x+\bothfloor {j_1 n}}{n}\right)\right),\\
g_n(x) = \gamma n^{\varchange a {f_n(x)}}, \qquad g_{n,h}(x) = g_n(x+h) - g_n(x).
\end{gather*}
Note that $f_n$ and $g_n$ are $C^3$ functions on $[1,N_n]$ and that $g_{n,h}$ is a $C^3$ function on $[1,N_n-h]$. Because the sequences of supremum norms of the derivatives of the $\cent_n$'s are bounded, we see that for $n$ sufficiently large, for all $x \in [1,N_n]$, $|f_n'(x) - n^{-1}| \ll n^{-2}$ while $|f_n^{(i)}(x)| \ll n^{-(i+1)}$ for $i \in \{2,3\}$. 

Using this notation, we must show
\begin{align}\label{eqn:equidistributiontoshow}\frac{1}{N_n} \sum_{i=1}^{N_n} \de_{\fracpart {g_n(i)}} \longrightarrow \restr{\lambda}{[0,1)} \quad \text{ as } n \to \infty,\end{align}
where $\de_{x}$ denotes the unit point mass at the point $x \in [0,1)$ and convergence of probability measures is meant in the weak-$\ast$ topology.

Since $\varchangeempty a$ is strictly increasing on $(a,\sqrt{a})$ and $\overline\openint \subseteq (a,\sqrt{a})$, we can fix $\sigma_1$, $\sigma_2$ such that for all $x \in J$,
\[1 < \sigma_1 < \varchange a{x} < \sigma_2 < 2.\]
To handle the exponential sum estimates that follow, we will show that there exist positive constants $C_1$ and $C_2$ (depending on $a$ and $\gamma$) such that for all $h \in \N$, all sufficiently large $n \in \N$, and all $x \in [1,N_n-h]$,
\begin{align}\label{eqn:boundongdoubleprime} C_1 \ h \ n^{-(3-\sigma_1)} \leq \left| g_{n,h}''(x) \right| \leq C_2 \ h \ n^{-(3-\sigma_2)}.\end{align}
By the MVT, $g_{n,h}''(x) = h g_n'''(\xi_x)$ for some $\xi_x \in (x, x+h)$, so it suffices to show that for all $h \in \N$, all sufficiently large $n \in \N$, and all $x \in [1,N_n]$,
\begin{align}\label{eqn:boundongtripleprime} C_1 \ n^{-(3-\sigma_1)} \leq \left| g_{n}'''(x) \right| \leq C_2 \ n^{-(3-\sigma_2)}.\end{align}
Writing $g_n'''(x)$ explicitly reveals that
\[g_n'''(x) = g_n(x) (\log n)^3 \left( \frac{f_n'(x)}{f_n(x)} \right)^{3} \left(\frac{\varchange a {f_n(x)}^6}{-(\log a)^3} - \frac{r_n(x)}{\log n} \right),\]
where $r(x)$ is a sum of nine terms, five of which are of the form $c (\log a)^{i} (\log n)^{-(3-i)}\allowbreak (\log f_n(x))^{-j}$ where $c \in \{2,3,6\}$, $i \in \{1,2\}$ and $j \in \{2,3,4,5\}$, and four of which are of the form $c (\log a)^{i} (\log n)^{-(3-i)} f_n(x)^{j}  f_n^{(j+1)}(x) f_n'(x)^{-(j+1)}\allowbreak (\log f_n(x))^{-k}$ where $c \in \{1,-3,-6\}$, $i, j \in \{1,2\}$, and $k \in \{2,3,4\}$. By the bounds on the derivates of $f_n$, for sufficiently large $n$, $|r_n(x)|$ is bounded from above uniformly in $x \in [1,N_n]$. The inequality in (\ref{eqn:boundongtripleprime}) follows for $n$ sufficiently large since for all $x \in [1,N_n]$, the terms $f_n(x)$ and $\varchange a {f_n(x)}^6 / -(\log a)^3$ are bounded from above and away from 0, $n^{\sigma_1} \leq \big|g_n(x) (\log n)^3 \big| \leq n^{\sigma_2}$ and $(2n)^{-1} \leq |f_n'(x)| \leq 2 n^{-1}$.

To prove (\ref{eqn:equidistributiontoshow}), it suffices by Weyl's Criterion (\cite{kuipersbook}, Chapter 1, Theorem 2.1) to show that for all $b \in \Z \setminus \{0\}$,
\[\frac 1{N_n} \sum_{i=1}^{N_n} e \big(b g_n(i) \big) \longrightarrow 0 \quad \text{as} \quad n \to \infty,\]
where $e(x) = e^{2 \pi i x}$. By the van der Corput Difference Theorem (\cite{kuipersbook}, Chapter 1, Theorem 3.1) and another application of Weyl's Criterion, it suffices to prove that for all $h \in \N$ and for all $b \in \Z \setminus \{0\}$,
\begin{align}\label{eqn:vdcorputdifference}\frac 1{N_n-h} \sum_{i=1}^{N_n-h} e \big(b g_{n,h}(i) \big) \longrightarrow 0 \quad \text{as} \quad n \to \infty.\end{align}
An exponential sum estimate (\cite{kuipersbook}, Chapter 1, Theorem 2.7) gives us that
\[\frac 1{N_n-h} \left| \sum_{i=1}^{N_n-h} e \big(b g_{n,h}(i) \big) \right| \leq \left( \frac{|b| \ |g_{n,h}'(N_n-h) - g_{n,h}'(1) | +2}{N_n-h} \right) \left( \frac{4}{\sqrt{|b| \de}} +3 \right),\]
where $\de = C_1 h n^{-(3-\sigma_1)}$ from (\ref{eqn:boundongdoubleprime}). By the MVT and the upper bound from (\ref{eqn:boundongdoubleprime}), we see the right hand side is bounded from above for sufficiently large $n$ by
\[\left( |b|C_2 h \ n^{-(3-\sigma_2)} + \frac 2{N_n-h} \right) \left( \frac{4n^{(3-\sigma_1)/2}}{\sqrt{|b| C_1 h }} +3 \right) \ll \frac{n^{(3-\sigma_1)/2}}{n \sqrt{|b| h}},\]
where the implicit constant depends on $a$, $\gamma$, $\eta_1$, and $\eta_2$. The limit in (\ref{eqn:vdcorputdifference}) follows since $(3-\sigma_1)/2 < 1$.
\end{proof}

Now we can deduce Theorem \ref{thm:firstDAform}$'$ from Theorem \ref{thm:mainthm}.

\begin{proof}[Proof of Theorem \ref{thm:firstDAform}$'$]
Let $\Theta \subseteq (a,1)$ be the set of those $\theta$ satisfying the conclusion of Theorem \ref{thm:firstDAform}$'$. We will show that $\Theta$ is of full Lebesgue measure by showing separately that it has full measure in the intervals $(a,\sqrt a)$ and $(\sqrt a,1)$.

To show that $\Theta \cap (a,\sqrt a)$ is of full measure, we will cover $(a,\sqrt a)$ by short intervals and apply Theorem \ref{thm:mainthm} to each one. We will define $(\psi_n)_{n \in \N}$, $(\varphi_n)_{n \in \N}$, and $(\rho_n)_{n \in \N}$ so that every $\theta \in (a,\sqrt{a})$ for which (\ref{eqn:systemnew}) is solvable is a $\theta$ for which (\ref{eqn:systemtwo}) is solvable.

Note that $(a,\sqrt{a})$ can be covered by open intervals of the form $(j_1,j_2)$ with closure contained in $(a,\sqrt{a})$ with the property that
\begin{align}\label{eqn:intproperty}\varchange a{j_2} - \varchange a{j_1} < 2 - \varchange a{j_2}.\end{align}
Let $\openint = (j_1,j_2)$ be such an interval. We will define $(\psi_n)_{n \in \N}$, $(\varphi_n)_{n \in \N}$, and $(\rho_n)_{n \in \N}$ and verify that the conditions of Theorem \ref{thm:mainthm} hold.

For $n \in \N$, put
\[\psi_n = \frac{\min(c,10^{-1})}{n^{\varchange a {j_2}-1}}.\]
Because $0 < \varchange a {j_2}-1 < 1$, it follows from the discussion just after the statement of Theorem \ref{thm:mainthm} that conditions (A1) through (A4) and condition (B3) (with $\psi_p$ as the maximum) hold for this choice of $\psi_n$.

Define $\cent_n: \overline{J} \to \R$ to be
\[\cent_n(\theta) = \frac{\kappa \theta}{\varchange a \theta n^{\varchange a \theta - 1}}.\]
Since $t_a$ is infinitely differentiable on $\overline{J}$ with derivatives bounded uniformly from above and away from $0$, the function $\cent_n$ is $C^3$ on $\overline J$ and for $i \in \{0,1,2,3\}$,
\begin{align}\label{eqn:supnormestimates}\supnormopenint{\cent_n^{(i)}} \ll \frac{(\log n)^{i}}{n^{\varchange a \theta - 1}}.\end{align}
Condition (B1) in Theorem \ref{thm:mainthm} follows from (\ref{eqn:supnormestimates}) with $i=0$. To verify (B2), note that
\[\cent_n'(\theta) = \frac{\kappa}{n^{\varchange a \theta - 1}} \left( \frac{\log n}{\log \theta} + \frac{1+\log \theta}{\log a} \right).\]
It follows that for $n$ sufficiently large, $\supnormopenint{\cent_n'} = |\cent_n'(j_1)|$. This sequence is eventually decreasing, verifying (B2).  Condition (B3) with $\supnormopenint{\cent_p'}$ as the max follows from (\ref{eqn:supnormestimates}), a calculation, and the inequality in (\ref{eqn:intproperty}). Condition (B4) follows from a calculation and the inequality $t_a(j_2) - t_a(j_1) < 1$, which follows from the fact that $1 < t_a(j_1) < t_a(j_2) < 2$.

Define $\rho_n: \overline{J} \to \R$ to be $\rho_n(\theta) = \gamma n^{\varchange a \theta}$. Let $J' = (j_1',j_2')$ be a proper subinterval of $J$, and note that $\|\rho_n\|_{\overline{J'},\infty} \ll n^{\varchange a {j_2'}} \log n$. In order to verify (C1), we have only to note that the conditions of Lemma \ref{lem:equid} are met by the inequalities in (\ref{eqn:supnormestimates}). Condition (C2) follows from a calculation and the fact that $t_a(j_2') < t_a(j_2)$.

It is simple to check that every $\theta \in (a,\sqrt{a})$ for which (\ref{eqn:systemnew}) is solvable is a $\theta$ for which (\ref{eqn:systemtwo}) is solvable.  Since the conditions of Theorem \ref{thm:mainthm} hold, the set $\Theta \cap \openint$ is of full measure. Since $(a,\sqrt{a})$ was covered by such intervals $\openint$, the set $\Theta \cap (a,\sqrt{a})$ is of full measure.

To show that the set $\Theta \cap (\sqrt a,1)$ is of full measure, we will show that for all $\intthree > \sqrt a$, the set $(\intthree,1) \setminus \Theta$ has zero measure. Define $\varphi_n$ as above, and let $\sigma \in (1,\varchange a\intthree -1)$. Put

\[H_n = \left\{ \theta \in (\intthree,1) \ \middle | \ \big \| \theta n + \cent_n(\theta) \big\| \leq \frac{1}{n^{\sigma} } \right\}.\]
If $\theta \in (\intthree,1) \setminus \Theta$, then (\ref{eqn:systemtwo}) is solvable, meaning that for infinitely many $n\in \N$, $\fracdist {\theta n + \cent_n(\theta)} \leq 1 \big/ n^{\sigma}$. It follows that
\begin{align}\label{eqn:boundfromabovebylimsup}(\intthree,1) \setminus \Theta \subseteq \limsup_{n \to \infty} H_n.\end{align}
Just as in (\ref{eqn:Enisaunionofintervals}), $H_n$ is covered by a union of $\ll (1-\intthree)n$ intervals, each of length $\ll n^{-(\sigma+1)}$. Since $\sigma > 1$, $\sum_{n=1}^\infty \lambda(H_n) < \infty$. By the first Borel-Cantelli Lemma, $\limsup_{n \to\infty}H_n$ has zero measure, so (\ref{eqn:boundfromabovebylimsup}) implies that $(\intthree,1) \setminus \Theta$ has zero measure.
\end{proof}

Finally, we deduce Corollary \ref{cor:maincortwo} from Theorem \ref{thm:mainapplication}.

\begin{proof}[Proof of Corollary \ref{cor:maincortwo}]
Denote by $Q(\al)$ the limit quotient set in (\ref{eqn:limitquotentset}). Note that $a \in Q(\al)$ if and only if the linear equation $y=ax$ is solvable in $\psa$. By Theorem \ref{thm:mainapplication}, the set
\[ \bigcap_{a \in \Qplus \setminus \{1\}} \big\{ \al \in (1,2) \ \big| \ \text{the equation $y=ax$ is solvable in $\psa$} \big\}\]
is of full measure in the interval $(1,2)$, proving that for Lebesgue-a.e. $\al < 2$, $Q(\al) = \Q_+$. On the other hand, Theorem \ref{thm:mainapplication} gives that the set
\[ \bigcup_{a \in \Qplus \setminus \{1\}} \big\{ \al \in (2,\infty) \ \big| \ \text{the equation $y=ax$ is solvable in $\psa$} \big\}\]
is of zero measure in $(2,\infty)$, proving that for Lebesgue-a.e. $\al > 2$, $Q(\al) = \{1\}$.
\end{proof}

\bibliographystyle{abbrv}
\bibliography{thesisbib}

\end{document}